%
%
%
\documentclass[11pt]{amsart}
\usepackage{amssymb,mathrsfs,graphicx,enumerate}

\topmargin-0.1in \textwidth6.in \textheight8.5in \oddsidemargin0in
\evensidemargin0in
\title[   ]{Asymptotic analysis of Vlasov-type equations under strong local alignment regime}

\author[Kang]{Moon-Jin Kang}
\address[Moon-Jin Kang]{\newline Department of Mathematics, \newline The University of Texas at Austin, Austin, TX 78712, USA}
\email{moonjinkang@math.utexas.edu}

\author[Vasseur]{Alexis F. Vasseur}
\address[Alexis F. Vasseur]{\newline Department of Mathematics, \newline The University of Texas at Austin, Austin, TX 78712, USA}
\email{vasseur@math.utexas.edu}

\newtheorem{theorem}{Theorem}[section]
\newtheorem{lemma}{Lemma}[section]

\newtheorem{proposition}{Proposition}[section]
\newtheorem{remark}{Remark}[section]

\newcommand{\bbr}{\mathbb R}

\newcommand{\bbt} {\mathbb T}

\numberwithin{figure}{section}
%


\newcommand{\beq}{\begin{equation}}
\newcommand{\eeq}{\end{equation}}
\newcommand{\bsp}{\begin{split}}
\newcommand{\esp}{\end{split}}



















\def\eps{\varepsilon }

\newcommand\adots{\mathinner{\mkern2mu\raise1pt\hbox{.}
\mkern3mu\raise4pt\hbox{.}\mkern1mu\raise7pt\hbox{.}}}


%

\def\charf {\mbox{{\text 1}\kern-.30em {\text l}}}







\begin{document}

\date{\today}

\subjclass{    } \keywords{}

\thanks{\textbf{Acknowledgment.} M.-J. Kang was supported by Basic Science Research Program through the National Research Foundation of Korea funded by the Ministry of Education, Science and Technology (NRF-2013R1A6A3A03020506). A. F. Vasseur was partially supported by the NSF Grant DMS 1209420.
}

\begin{abstract}
We consider the hydrodynamic limit of a collisionless and non-diffusive kinetic equation under strong local alignment regime. The local alignment is first considered by Karper, Mellet and Trivisa in \cite{K-M-T-2}, as a singular limit of an alignment force proposed by Motsch and Tadmor in \cite{M-T-1}. As the local alignment strongly dominate, a weak solution to the kinetic equation under consideration converges to the local equilibrium, which has the form of mono-kinetic distribution. We use the relative entropy method and weak compactness to rigorously justify the weak convergence of our kinetic equation to the pressureless Euler system. 
\end{abstract}
\maketitle \centerline{\date}


\section{Introduction}
\setcounter{equation}{0}
Recently, a variety of mathematical models capturing the emergent phenomena of such as a flock of birds, a swarm of bacteria or a school of fish have received lots of attention extensively in the mathematical community as well as the physics, biology, engineering and social science, etc. (See for instance \cite{C-C-R,C-K-F-L,M-T-2,Z-E-P} and the refereces therein.) In particular, the flocking model introduced by Cucker and Smale in \cite{C-S} has received a considerable attention. (See for instance \cite{C-F-R-T,H-L,H-T,Sh}.) In \cite{M-T-1}, Motsch and Tadmor improved the Cucker-Smale model by considering new interaction, which is non-local and non-symmetric alignment. Recently, in \cite{K-M-T-3}, Karper, Mellet and Trivisa introduced a strong local alignment interaction as the singular limit of the alignment proposed by Motsch-Tadmor. 

In this paper, we consider the Vlasov-type equation with the local alignment interaction, which is described by
\begin{align}
\begin{aligned} \label{main-eq}
\partial_t f + v\cdot\nabla_x f  -\lambda \nabla_v\cdot(v f) +  \nabla_v\cdot ((u-v)f) =0.
\end{aligned}
\end{align} 
Here, $f=f(t,x,v)$ is the one-particle distribution function at position $x\in\bbr^d$ with velocity $v\in\bbr^d$ at time $t>0$, and $\lambda\ge 0$ is a frictional coefficient.  In the force term $\nabla_v\cdot ((u-v)f)$, $u$ denotes the averaged local velocity defined by
\[
u=\frac{\int_{\bbr^d} v f  dv}{\int_{\bbr^d} f dv},
\]
thus the force term governs the local alignment interaction.\\
In \cite{K-M-T-1,K-M-T-3}, the authors studied the kinetic Cucker-Smale model with the local alignment term in \eqref{main-eq}. The local alignment force can be regarded as the singular limit of the following kinetic Motsch-Tadmor model \cite{M-T-1}:
\begin{align}
\begin{aligned} \label{M-T}
&\partial_t f + v\cdot\nabla_x f+\nabla_v\cdot (L[f] f) =0,\\
&L[f](x,v) = \frac{\int_{\bbr^{2d}} K^{r}(x-y) f(y,w)(w-v)dwdy}{\int_{\bbr^{2d}} K^{r}(x-y) f(y,w)dwdy}.
\end{aligned}
\end{align}
Here, the kernel $K^r$ is a communication weight and $r$ denotes the radius of influence of $K^r$. We know that the non-local alignment $L[f]$ is not symmetric interaction.\\
In \cite{K-M-T-3}, the authors considered the case that the communication weight is extremely concentrated nearby each agent. That is, they rigorously justified the singular limit $r\rightarrow 0$, i.e., as $K^r$ converges to the Dirac distribution $\delta_0$, the Motsch-Tadmor kernel $K^r$ converges to a local alignment term:
\[
L[f](x,v) \rightarrow \frac{\int_{\bbr^{d}} f(x,w)(w-v)dw}{\int_{\bbr^{d}} f(x,w)dw}=:u-v.
\]


We here address a natural question on the asymptotic regime corresponding to a strong local alignment force in the revised Motsch-Tadmor model \eqref{main-eq}. More precisely, we aim to rigorously justify the hydrodynamic limit of the singular equation:
\begin{align}
\begin{aligned} \label{vlasov}
&\partial_t f^{\eps} + v\cdot\nabla_x f^{\eps} -\lambda \nabla_v\cdot(v f^{\eps} ) +  \frac{1}{\eps}\nabla_v\cdot ((u^{\eps}-v)f^{\eps}) =0 ,\quad (x,v)\in \bbr^d\times\bbr^d,\quad t>0,\\
& f^{\eps}(0,x,v) = f_0^{\eps}(x,v),
\end{aligned}
\end{align} 
where $\displaystyle u^{\eps}=\frac{\int_{\bbr^d} v f^{\eps}  dv}{\int_{\bbr^d}  f^{\eps} dv}$ is the averaged local velocity.\\ 
The analytical difficulty for such issue is mainly due to the singularity of the local equilibrium. More precisely, the local equilibrium for \eqref{vlasov} has the form of mono-kinetic distribution as
\beq\label{mono-ki}
\rho(x,t) \delta(v-u(x,t)).
\eeq
For this reason, we will consider the hydrodynamic limit in the sense of distributions. Under this hydrodynamic regime, the associated limit system is the well-known damped pressureless Euler system:
\begin{align}
\begin{aligned} \label{PE}
&\partial_t \rho + \nabla\cdot (\rho u) = 0,  \\
&\partial_t (\rho u) + \nabla \cdot (\rho u\otimes u) = -\lambda \rho u , \quad x \in \bbr^d,~t > 0,\\
&\rho|_{t=0} = \rho_0,\quad u |_{t=0} = u_0.
\end{aligned}
\end{align}
When there is no friction, i.e., $\lambda= 0$, the system \eqref{PE} simply becomes the pressureless Euler system, which serve as a mathematical model for the formation of large-scale structures in astrophysics and the aggregation of sticky particles \cite{S-S-Z,Ze}. We refer to \cite{B-G,Bo,H-W,W-R-S} as the study on the existence and uniqueness of weak solutions to the pressureless Euler system in one space dimention, and the $\delta$-shock formation was studied in \cite{C-L}. Concerning the study on the multidimensional pressureless Euler system with non-local alignment force capturing flocking behavior, we refer to \cite{H-K-K1,M-T-1}. 
The macroscopic models studied in \cite{H-K-K1,H-K-K2,M-T-1} was formally derived from kinetic Cucker-Smale model \cite{H-T} and Motsch-Tadmor model \cite{M-T-1} under the mono-kinetic ansatz \eqref{mono-ki}, respectively.

If we consider the stochastic particles as the Brownian motion in \eqref{vlasov}, the kinetic equation \eqref{vlasov} becomes Vlasov-Fokker-Planck type equation, in which the Laplacian term with respect to $v$ is added as follows.
\begin{align}
\begin{aligned} \label{model-1}
\partial_t f^{\eps} + v\cdot\nabla_x f^{\eps} -\lambda \nabla_v\cdot(v f^{\eps} ) +  \frac{1}{\eps}\nabla_v\cdot ((u^{\eps}-v)f^{\eps}) -\frac{1}{\eps}\Delta_v f^{\eps} =0.
\end{aligned}
\end{align}
This type model have been studied in \cite{C-C-K,K-M-T-1,K-M-T-2,K-M-T-3}. In \cite{K-M-T-2}, the authors justified the hydrodynamic limit of \eqref{model-1} with the non-local alignment term due to Cucker-Smale model, under the strong local alignment and strong noise regime. As a mathematical tool, they used the relative entropy method with strictly convex entropy to show the strong convergence from \eqref{model-1} to the isothermal Euler system with non-local alignment. Indeed, the solution $f$ of \eqref{model-1} strongly converges to the local Maxwellian $\rho\exp(-\frac{|v-u|^2}{2})$ as $\eps\rightarrow 0$, thanks to the existence of strictly convex entropy for the isothermal Euler system.\\ 
On the other hand, for the pressureless Euler system \eqref{PE}, we have a convex entropy given by 
\beq\label{entropy-11}
\mathcal{E}(\rho, u)=\rho\frac{|u|^2}{2},
\eeq
which is not strictly convex with respect to $\rho$. Notice that \eqref{entropy-11} is physically the kinetic energy, but regarded as an entropy in the theory of conservation laws. We will use the relative entropy method with \eqref{entropy-11} in the weak sense, compared to the previous results based on the strictly convex entropy. (See for example \cite{C-C-K,K-M-T-2,M-V}.)\\

Our main tool based on the relative entropy method follows a program initiated in \cite{L,L-V,S-V,V, V-1} for the study on the stability of inviscid shocks for the scalar or system of conservation laws verifying a certain entropy condition. That has been used as an important tool in the study of asymptotic limits to conservation laws as well. (See for instance \cite{B-G-L-1,B-G-L-2,B-T-V,B-V,C-V,G-J-V-2,G-S,K-V,L-M,M-S,M-V,S,Y}.)\\

The rest of this paper is organized as follows. In Section 2, we first present the existence results of the kinetic equation \eqref{vlasov} and the asymptotic system \eqref{PE}, then the main theorem of this article. In Section 3 is devoted to the proof of Theorem \ref{main thm}.   

\section{Preliminaries}
\setcounter{equation}{0}
In this section, we present our main result on the hydrodynamic limit from a weak solution $f^{\eps}$ of the kinetic equation\eqref{vlasov} to a strong solution $(\rho, u)$ of the asymptotic system \eqref{PE}. For that, we first need to present the existence result for the weak solution of \eqref{vlasov} and the classical solution of \eqref{PE}. 
\subsection{Existence of weak solutions to \eqref{vlasov}} 
We here say that $f^{\eps}$ is a weak solution of \eqref{vlasov} if for any $T>0$,
\begin{align}
\begin{aligned} \label{weak-reg}
& f^{\eps}\in C(0,T; L^1(\bbr^{2d}))\cap L^{\infty}((0,T)\times\bbr^{2d}),\\
& |v|^2 f \in L^{\infty}(0,T; L^1(\bbr^{2d})),
\end{aligned}
\end{align}
and \eqref{vlasov} holds in the sense of distribution, that is, for any $\psi\in C_c^{\infty} ([0,T)\times\bbr^{2d})$, the weak formulation holds
\begin{align}
\begin{aligned} \label{weak-form}
&   \int_0^t\int_{\bbr^{2d}} f^{\eps} [ \partial_t\psi + v\cdot\nabla_x\psi -\lambda v\cdot\nabla_v\psi+\frac{1}{\eps}(u^{\eps}-v) \cdot\nabla_v\psi ]dvdxds\\
&\qquad +\int_{\bbr^{2d}} f^{\eps}_0\psi(0,\cdot) dvdx =0.
\end{aligned}
\end{align}
Before stating the existence of weak solution to \eqref{vlasov}, we define a kinetic entropy $\mathcal{F}(f^{\eps})$ mathematically and dissipation $ \mathcal{D}_{\eps}(f^{\eps})$ for \eqref{vlasov} by
\begin{align*}
\begin{aligned} 
&\mathcal{F}(f^{\eps}):=\int_{\bbr^d} \frac{|v|^2}{2} f^{\eps} dv,\\
& \mathcal{D}_{\eps}(f^{\eps}) : = \int_{\bbr^{2d}} f^{\eps} |u^{\eps}-v|^2 dx dv .
\end{aligned}
\end{align*}
\begin{proposition}\label{prop-weak} 
For any $\eps>0$, assume that $f^{\eps}_0$ satisfies 
\beq\label{weak-initial}
f^{\eps}_0\ge0,\quad f^{\eps}_0 \in L^{1}\cap L^{\infty}(\bbr^{2d}),\quad |v|^2 f^{\eps}_0 \in L^{1}(\bbr^{2d}).
\eeq
Then there exists a weak solution $f^{\eps}\ge 0$ of \eqref{vlasov} in the sense of \eqref{weak-reg} and \eqref{weak-form}. Moreover, $f^{\eps}$ satisfies the mass conservation $\|f^{\eps}(t)\|_{L^{1}(\bbr^{2d})}=\|f^{\eps}_0\|_{L^{1}(\bbr^{2d})}$ for all $t>0$, and the entropy inequality
\begin{align}
\begin{aligned}\label{kinetic-ineq}
&\int_{\bbr^d} \mathcal{F}(f^{\eps})(t) dx + \frac{1}{\eps} \int_0^t  \mathcal{D}_{\eps}(f^{\eps})(s) ds +\lambda \int_0^t \int_{\bbr^{2d}}  |v|^2 f^{\eps} dvdxds  \le \int_{\bbr^d} \mathcal{F}(f^{\eps}_0) dx.
\end{aligned}
\end{align}
\end{proposition}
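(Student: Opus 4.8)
The plan is to construct $f^\eps$ by a vanishing-viscosity and velocity-field regularization, to derive a priori estimates uniform in the regularization parameters, and then to pass to the limit; since $\eps>0$ is fixed throughout I treat $\eps$ as a constant and only let a regularization parameter $\delta\to 0$. First I would regularize \eqref{vlasov} by adding an artificial diffusion $\delta\Delta_v f^\delta$ and by replacing the singular local velocity $u^\eps=j^\eps/\rho^\eps$ (where $\rho^\eps:=\int_{\bbr^d}f^\eps\,dv$ and $j^\eps:=\int_{\bbr^d}vf^\eps\,dv$) by a mollified, vacuum-free version such as $u^\eps_\delta := (K_\delta * j^\delta)/(\delta + K_\delta*\rho^\delta)$. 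For frozen $u$ the regularized equation is a linear Vlasov--Fokker--Planck equation with smooth coefficients, for which existence, positivity and smoothness are classical; a fixed-point argument on the map $u\mapsto f\mapsto u[f]$, using the mass and moment bounds below to gain compactness, then yields a solution $f^\delta\ge 0$ of the regularized problem.

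The core of the argument is a set of estimates uniform in $\delta$. Nonnegativity of $f^\delta$ follows from the maximum principle, and mass conservation follows by integrating the equation in $(x,v)$, since the transport, friction and alignment terms are all in divergence form while the diffusion integrates to zero. For the $L^\infty$ bound I would use that, in nonconservative form, the equation reads $\partial_t f + v\cdot\nabla_x f -\lambda v\cdot\nabla_v f + \tfrac1\eps(u-v)\cdot\nabla_v f = (\lambda d + \tfrac d\eps)f + \delta\Delta_v f$; along the characteristic flow the transport part only amplifies $\|f\|_{L^\infty}$ by the factor $e^{(\lambda d + d/\eps)t}$, while the diffusion obeys its own maximum principle, so one obtains a $\delta$-uniform bound on $[0,T]$. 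The entropy inequality is obtained by multiplying by $|v|^2/2$ and integrating: the $x$-transport term vanishes, the friction term produces $+\lambda\int|v|^2f$, and for the alignment term the decisive point is the algebraic identity $\int_{\bbr^d}(u^\eps-v)f^\eps\,dv=0$, which turns $-\tfrac1\eps\int v\cdot(u^\eps-v)f^\eps$ into $+\tfrac1\eps\int|u^\eps-v|^2 f^\eps=\tfrac1\eps\mathcal D_\eps(f^\eps)$; the diffusion contributes only the harmless term $\delta d\,\|f_0^\delta\|_{L^1}\to 0$. This yields \eqref{kinetic-ineq} with a vanishing remainder and, in particular, a uniform bound on $\sup_{[0,T]}\int|v|^2 f^\delta$ and on $\tfrac1\eps\int_0^T\mathcal D_\eps(f^\delta)$, hence the regularity \eqref{weak-reg}.

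With these bounds I would extract a weak-$*$ limit $f^\delta\rightharpoonup f^\eps$ in $L^\infty((0,T)\times\bbr^{2d})$, the second-moment control providing tightness in $v$. The delicate step, and the main obstacle, is passing to the limit in the nonlinear alignment term, since $u^\eps=j^\eps/\rho^\eps$ is singular on the vacuum set $\{\rho^\eps=0\}$. To handle it I would establish strong compactness of the moments $\rho^\delta$ and $j^\delta$ in $L^1_{loc}((0,T)\times\bbr^d)$ by a velocity-averaging lemma, exploiting the kinetic transport structure together with the extra $v$-regularity from the diffusion; the vacuum is then controlled by the Cauchy--Schwarz bound $|j^\delta|^2\le\rho^\delta\int|v|^2 f^\delta\,dv$, which forces $j^\delta=0$ wherever $\rho^\delta=0$ and makes the force integrable (in the weak form \eqref{weak-form} the relevant quantity $u^\eps\cdot\int\nabla_v\psi\,f^\eps\,dv$ is dominated by $\|\nabla_v\psi\|_{L^\infty}|j^\eps|$). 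Strong convergence of $\rho^\delta,j^\delta$ then identifies the limit of the force and of $\int|j^\delta|^2/\rho^\delta$, so that, combined with the weak lower semicontinuity of the linear functional $\int|v|^2 f$, the entropy identity for $f^\delta$ passes to the limit as the inequality \eqref{kinetic-ineq}. I expect the velocity-averaging compactness of the moments, uniform in $\delta$ and compatible with the vacuum, to be the technically hardest point.
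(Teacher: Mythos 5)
Your overall architecture---regularize, derive $\delta$-uniform bounds, use velocity averaging for compactness of the moments, and control the vacuum via $|j^\eps|^2\le\rho^\eps\int|v|^2f^\eps\,dv$---is in substance the approach the paper relies on. Be aware, though, that the paper does not prove Proposition \ref{prop-weak} at all: it defers entirely to the existence theory of Karper--Mellet--Trivisa \cite{K-M-T-1}, remarking that the extra terms there (nonlocal alignment, noise, self-propulsion) are inessential, and it only records the \emph{formal} derivation of \eqref{kinetic-ineq} by multiplying \eqref{vlasov} by $|v|^2/2$ and integrating. So your proposal is essentially a blind reconstruction of the cited proof rather than a different route, and its value should be judged as such.

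There is, however, one genuine gap, and it sits exactly at the step you call ``decisive.'' The identity $\int_{\bbr^d}(u^\eps-v)f^\eps\,dv=0$ holds only for the exact local velocity $u^\eps=j^\eps/\rho^\eps$; it fails for the regularized velocity $u^\eps_\delta=(K_\delta*j^\delta)/(\delta+K_\delta*\rho^\delta)$, which is the velocity actually present in the equation solved by $f^\delta$, i.e.\ at the only level where you can legitimately compute. Redoing the computation there, the alignment term yields
\begin{equation*}
-\int_{\bbr^d} v\cdot(u^\eps_\delta-v)f^\delta\,dv \;=\; \int_{\bbr^d}|v-u^\eps_\delta|^2f^\delta\,dv \;+\; u^\eps_\delta\cdot\bigl(j^\delta-\rho^\delta u^\eps_\delta\bigr),
\end{equation*}
and the last term has no sign. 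With your mollified denominator, the natural bounds on it involve the ratio $\rho^\delta/(K_\delta*\rho^\delta)$, which is not controlled (think of $\rho^\delta$ concentrating at spatial scales below $\delta$), so it is not clear that the kinetic-energy bound closes uniformly in $\delta$---and that bound is what everything else rests on: the second-moment part of \eqref{weak-reg}, tightness, the dissipation bound \eqref{diss-zero}, integrability of the force in \eqref{weak-form}, and the $L^2$ setting of your averaging lemma. The gap is fixable: with the unmollified vacuum lift $u^\eps_\delta=j^\delta/(\rho^\delta+\delta)$ the error term becomes $\delta|j^\delta|^2/(\rho^\delta+\delta)^2\ge 0$, i.e.\ it is itself dissipative and the energy inequality closes; the price is that the fixed-point step then requires a second, inner regularization (mollify to solve, remove the mollification at fixed $\delta$ by the same compactness argument), after which \eqref{kinetic-ineq} with the true $u^\eps$ follows from the minimization property $\int f|v-j/\rho|^2\,dv\le\int f|v-c|^2\,dv$ and lower semicontinuity. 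Two smaller remarks: velocity averaging neither needs nor can use ``extra $v$-regularity from the diffusion'' (that regularity degenerates as $\delta\to0$); it relies only on the transport structure with divergence-form right-hand sides bounded in $L^2$, which your $L^\infty\cap L^1$ and moment bounds already provide. And mass conservation in the limit requires ruling out escape to spatial infinity, which should be said explicitly (a cutoff argument using the second-moment bound suffices).
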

We make use of the main result in \cite{K-M-T-1} to prove this result. In \cite{K-M-T-1}, the authors showed the existence of weak solutions to the kinetic Cucker-Smale model with local alignment, noise, self-propulsion and friction:
\begin{align}
\begin{aligned} \label{KMT}
&\partial_t f + v\cdot\nabla_x f +\nabla_v\cdot (L[f] f)+ \nabla_v\cdot ((u-v)f)\\
&\qquad = \mu \Delta_v f -  \nabla_v\cdot ((a-b|v|^2)vf) ,\\
&L[f] = \int_{\bbr^{2d}} \psi(x-y) f(y,w) (w-v) dw dy, \quad (x,v)\in \bbr^d\times\bbr^d,\quad t>0,
\end{aligned}
\end{align}
where the kernel $\psi$ is a smooth symmetric, $\mu, a,b$ are nonnegative constants.\\
From their analysis in \cite{K-M-T-1}, we notice that all terms except the local alignment term $\nabla_v\cdot ((u-v)f)$ are not crucial in the existence theory of weak solutions to \eqref{KMT}. Therefore the same existence theory can be applied to our case \eqref{vlasov}, so we refer \cite{K-M-T-1} for its proof.\\
On the other hand, we notice that the entropy inequality \eqref{kinetic-ineq} for \eqref{vlasov} is simply reduced more than that for \eqref{KMT} with $\mu>0$. Indeed, at least formally, multiplying \eqref{vlasov} by $|v|^2$ and integrating this over the phase space $\bbr^{2d}$, we have
\begin{align*}
\begin{aligned}
&   \frac{d}{dt}\int_{\bbr^{2d}} \frac{|v|^2}{2}  f^{\eps} dvdx + \lambda \int_{\bbr^{2d}} |v|^2 f^{\eps} dvdx
+ \frac{1}{\eps}\int_{\bbr^{2d}} f^{\eps} |u^{\eps}-v|^2 dx dv=0.
\end{aligned}
\end{align*}
This justifies the inequality \eqref{kinetic-ineq} rigorously by the standard method in \cite{K-M-T-1}.

\subsection{Existence of classical solutions to \eqref{PE}} 
We here present the local existence of classical solution to the pressureless Euler system \eqref{PE} as the following Proposition.  

\begin{proposition}\label{prop-strong} 
Assume that 
\beq\label{strong-initial}
\rho_0> 0\quad \mbox{in}~ \bbr^d\quad\mbox{and}\quad (\rho_0, u_0)\in H^{s}(\bbr^d)\times H^{s+1}(\bbr^d)\quad\mbox{for}~s>\frac{d}{2}+1.
\eeq
Then, there exists $T_*>0$ such that \eqref{PE} has a unique classical solution $(\rho, u)\in \mathcal{X}$ where $\mathcal{X}$ is the solution space defined by 
\begin{align*}
\begin{aligned}
\mathcal{X}&:= \{(\rho, u) : \rho\in C^0([0,T_*];H^{s}(\bbr^d))\cap C^1([0,T_*];H^{s-1}(\bbr^d)),\\
&\hspace{2cm} u\in C^0([0,T_*];H^{s+1}(\bbr^d))\cap C^1([0,T_*];H^{s}(\bbr^d))\}.
\end{aligned}
\end{align*}
Moreover, the smooth solution $(\rho, u)$ satisfies the entropy equality
\[
\frac{d}{dt}\mathcal{E}(\rho, u) +2\lambda \mathcal{E}(\rho, u) =0,
\]
where $\mathcal{E}(\rho, u)$ is an entropy for \eqref{PE} given by \eqref{entropy-11}.
\end{proposition}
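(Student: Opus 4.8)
The plan is to exploit the special structure of the pressureless system, namely that the velocity equation \emph{decouples} from the density. First I would pass to the nonconservative form of \eqref{PE}. Expanding $\partial_t(\rho u)+\nabla\cdot(\rho u\otimes u)$ and subtracting $u$ times the continuity equation, the momentum balance reduces, wherever $\rho>0$, to the damped Burgers-type system
\[
\partial_t u + (u\cdot\nabla) u = -\lambda u, \qquad u|_{t=0}=u_0,
\]
in which $\rho$ appears nowhere; this is precisely because the model is pressureless and there is no term $\nabla p(\rho)$ coupling the two unknowns. Written componentwise, $\partial_t u_i+\sum_j u_j\partial_j u_i=-\lambda u_i$, the coefficient matrices are $u_j\,\Id$ and are symmetric, so the system is quasilinear symmetric hyperbolic. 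Since $u_0\in H^{s+1}(\bbr^d)$ with $s+1>\frac{d}{2}+1$, the classical local well-posedness theory of Kato/Majda yields a time $T_*>0$ and a unique solution $u\in C^0([0,T_*];H^{s+1}(\bbr^d))\cap C^1([0,T_*];H^{s}(\bbr^d))$; the $C^1 H^s$ regularity comes from reading $\partial_t u=-(u\cdot\nabla)u-\lambda u$ off the equation and using that $H^s$ is an algebra for $s>\frac{d}{2}$.

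With $u$ now fixed, the continuity equation becomes the \emph{linear} transport equation $\partial_t\rho + u\cdot\nabla\rho + (\nabla\cdot u)\rho = 0$, $\rho|_{t=0}=\rho_0\in H^s(\bbr^d)$. Because $s-1>\frac{d}{2}$ makes $H^{s-1}$ a Banach algebra and $\nabla u\in C^0([0,T_*];H^{s}(\bbr^d))$ furnishes admissible coefficients, the standard linear hyperbolic theory propagates $H^s$ regularity, giving $\rho\in C^0([0,T_*];H^s(\bbr^d))$; differentiating in time, $\partial_t\rho=-u\cdot\nabla\rho-(\nabla\cdot u)\rho\in C^0([0,T_*];H^{s-1}(\bbr^d))$ by the algebra property, so $\rho\in C^1([0,T_*];H^{s-1}(\bbr^d))$. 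This places $(\rho,u)$ exactly in the space $\mathcal{X}$. For positivity I would integrate the transport equation along the flow $\dot X=u(t,X)$, which gives $\rho(t,X(t,x))=\rho_0(x)\exp\big(-\int_0^t(\nabla\cdot u)(s,X(s,x))\,ds\big)$; since $\nabla\cdot u$ is bounded on $[0,T_*]\times\bbr^d$ by the Sobolev embedding $H^{s+1}\hookrightarrow C^1$, the exponential stays bounded above and below, so $\rho_0>0$ forces $\rho>0$ on $[0,T_*]$. Uniqueness follows from uniqueness in each of the two decoupled problems.

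For the entropy equality I would differentiate $\int_{\bbr^d}\mathcal{E}(\rho,u)\,dx=\int_{\bbr^d}\rho\frac{|u|^2}{2}\,dx$ in time, substitute $\partial_t\rho=-\nabla\cdot(\rho u)$ and $\partial_t u=-(u\cdot\nabla)u-\lambda u$, and integrate by parts. The two convective contributions combine as
\[
-\int_{\bbr^d}\frac{|u|^2}{2}\nabla\cdot(\rho u)\,dx-\int_{\bbr^d}\rho\, u\cdot\nabla\tfrac{|u|^2}{2}\,dx=0,
\]
the first integral becoming $+\int\rho\,u\cdot\nabla\frac{|u|^2}{2}\,dx$ after integration by parts and cancelling the second, leaving only the damping term $-\lambda\int_{\bbr^d}\rho|u|^2\,dx=-2\lambda\int_{\bbr^d}\mathcal{E}(\rho,u)\,dx$. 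The $\mathcal{X}$-regularity together with the decay at infinity inherited from $H^s$ membership justifies every integration by parts, so that $\frac{d}{dt}\mathcal{E}(\rho,u)+2\lambda\mathcal{E}(\rho,u)=0$ in the integrated sense.

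I do not expect a deep obstacle here, precisely because decoupling removes the genuine coupling present in the isentropic or isothermal cases. The points requiring care are, first, keeping the density strictly positive and bounded: the pressureless equations provide no a priori control on $\rho$ beyond the transport identity above, so that identity must carry the positivity. Second, one must match the precise indices of $\mathcal{X}$, in particular obtaining one extra derivative on $u$ (namely $H^{s+1}$) than on $\rho$ (namely $H^s$); this asymmetry is exactly what the closed Burgers equation for $u$ delivers, and it is essential since the relative-entropy argument of the main theorem will differentiate $u$ once more than $\rho$.
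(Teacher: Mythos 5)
Your proof is correct, but note that the paper does not actually prove Proposition \ref{prop-strong}: it declares the result well known and refers to \cite{H-K-K1} and \cite{H-K-K2}, so your argument is a self-contained replacement rather than a variant of the paper's proof. The route you take is the natural one for this particular system: where $\rho>0$ the momentum equation reduces to the damped Burgers system $\partial_t u+(u\cdot\nabla)u=-\lambda u$, which is symmetric hyperbolic and closed in $u$, so Kato--Majda theory gives $u\in C^0([0,T_*];H^{s+1})\cap C^1([0,T_*];H^{s})$; the continuity equation then becomes linear transport with smooth coefficients, propagating $\rho_0\in H^s$ and, via the flow representation, the strict positivity of $\rho$. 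This decoupling is exactly what produces the asymmetric regularity ($u$ one derivative better than $\rho$) built into $\mathcal{X}$, and your integrated entropy computation is the correct reading of the (flux-free) identity stated in the proposition. By contrast, the cited references treat hydrodynamic Cucker--Smale systems in which the velocity equation contains a non-local alignment force involving $\rho$, so no decoupling is available and the full coupled quasilinear system must be estimated; citing them buys a statement covering that more general setting, while your argument buys transparency and elementarity. Two points you should make explicit to be airtight: first, for uniqueness you need that \emph{every} classical solution in $\mathcal{X}$ with $\rho_0>0$ keeps $\rho>0$ (apply the flow representation to an arbitrary solution, not just the constructed one), since only then does an arbitrary solution decouple into the two problems whose uniqueness you invoke; second, the two decoupled problems may a priori have different lifespans, so $T_*$ should be taken as the Burgers existence time, observing that the linear transport problem for $\rho$ survives as long as $u$ does.
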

\begin{remark}
For $s>\frac{d}{2}+1$, by Sobolev inequality, the solution $(\rho, u)$ obtained from the theorem above is indeed a classical solution, i.e, $(\rho, u)\in {\mathcal C}^1(\bbr^d \times [0,T_*])$.
\end{remark}
Proposition \ref{prop-strong} is the well-known result. We refer to \cite{H-K-K1} and \cite{H-K-K2} for its proof. In particular, the pressureless Euler system \eqref{PE} with damping $\lambda>0$ has a global-in-time classical solution, provided the initial data is small enough.

\subsection{Main result} 
We here state our result on the hydrodynamic limit of \eqref{vlasov}. For any fixed initial data $(\rho_0,u_0)$ satisfying \eqref{strong-initial}, we will consider a proper initial data $f^{\eps}_0$ satisfying \eqref{weak-initial} and
\begin{itemize}
\item
(${\mathcal A}1$): $\int_{\bbt^d} (\mathcal{F}( f^{\eps}_0) -  \mathcal{E}(\rho_0, u_0) ) dx =\mathcal{O}( \sqrt{\eps}),$
\vspace{0.2cm}
\item
(${\mathcal A}2$): $\rho^{\eps}_0 - \rho_0 =\mathcal{O}(\sqrt{\eps})$ \quad \mbox{in}~ $L^{\infty}(\bbr^d)$,
\vspace{0.2cm}
\item
(${\mathcal A}3$): $ u^{\eps}_0 - u_0 =\mathcal{O}(\sqrt{\eps})$\quad \mbox{in}~ $L^{\infty}(\bbr^d)$,
\end{itemize}
where $\rho^{\eps}_0$ and $u^{\eps}_0$ are defined by
\[
\rho^{\eps}_0=\int_{\bbr^d} f^{\eps}_0 dv,\quad u^{\eps} = \frac{1}{\rho^{\eps}_0}\int_{\bbr^d} v f^{\eps}_0 dv.
\]

\begin{theorem}\label{main thm} 
Assume that the initial datas $f^{\eps}_0$ and $(\rho_0, u_0)$ satisfy \eqref{weak-initial}, \eqref{strong-initial} and $({\mathcal A}1)$-$({\mathcal A}3)$.
Let $f^{\eps}$ be the weak solution to \eqref{vlasov} satisfying \eqref{kinetic-ineq}, and $(\rho, u)$ be the classical solution to \eqref{PE}. Then, we have
\begin{align}
\begin{aligned}\label{main-ineq}
\int_{\bbr^d}\rho^{\eps} (t) |(u^{\eps} - u)(t)|^2 dx \le C(T_*)\sqrt{\eps},\quad t\le T_*,
\end{aligned}
\end{align}
where $\rho^{\eps}=\int_{\bbr^d} f^{\eps} dv$, $\rho^{\eps}u^{\eps} = \int_{\bbr^d} v f^{\eps} dv$ and the constant $C(T_*)$ depends on $T_*$.\\
Therefore, we have the following convergences
\begin{align}
\begin{aligned}\label{w-conv}
&\rho^{\eps}\rightharpoonup \rho\quad\mbox{weakly} \quad \mbox{in} ~\mathcal{M}([0,T_*]\times\bbr^d),\\
&\rho^{\eps}u^{\eps} \rightharpoonup {\rho}u\quad\mbox{weakly} \quad \mbox{in} ~\mathcal{M}([0,T_*]\times\bbr^d),\\
&\rho^{\eps}u^{\eps}\otimes u^{\eps} \rightharpoonup {\rho}u\otimes u \quad\mbox{weakly} \quad \mbox{in} ~\mathcal{M}([0,T_*]\times\bbr^d),\\
&\int_{\bbr^d}f^{\eps} |v|^2 dv \rightharpoonup \rho |u|^2 \quad\mbox{weakly} \quad \mbox{in} ~\mathcal{M}([0,T_*]\times\bbr^d),
\end{aligned}
\end{align}
where $\mathcal{M}([0,T_*]\times\bbr^d)$ is the space of nonnegative Radon measures on $[0,T_*]\times\bbr^d$. Moreover, for any $\psi\in C^1(\bbr^d)$ with $\nabla\psi\in L^{\infty}(\bbr^d)$,
\beq\label{dirac-con}
\int_{\bbr^d}f^{\eps} \psi(v) dv \rightharpoonup \rho \psi(u) \quad\mbox{weakly} \quad \mbox{in} ~\mathcal{M}([0,T_*]\times\bbr^d).
\eeq
\end{theorem}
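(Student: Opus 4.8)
The plan is to run a relative entropy argument tailored to the degenerate entropy \eqref{entropy-11}. Writing the conserved macroscopic variables as $U^{\eps}=(\rho^{\eps},\rho^{\eps}u^{\eps})$, the reference state built from the smooth Euler solution as $\bar U=(\rho,\rho u)$, and viewing $\mathcal{E}$ as the function $(\rho,m)\mapsto |m|^2/(2\rho)$ of density and momentum $m$, I would introduce the functional
\[
\mathcal{H}^{\eps}(t):=\int_{\bbr^d}\Big[\mathcal{F}(f^{\eps})-\mathcal{E}(\bar U)-D\mathcal{E}(\bar U)\cdot(U^{\eps}-\bar U)\Big]\,dx.
\]
A direct computation, using $\int v f^{\eps}\,dv=\rho^{\eps}u^{\eps}$ together with $\int |v|^2 f^{\eps}\,dv=\rho^{\eps}|u^{\eps}|^2+\int|v-u^{\eps}|^2 f^{\eps}\,dv$, shows
\[
\mathcal{H}^{\eps}(t)=\int_{\bbr^d}\frac{\rho^{\eps}}{2}|u^{\eps}-u|^2\,dx+\frac12\int_{\bbr^d}\int_{\bbr^d}|v-u^{\eps}|^2 f^{\eps}\,dv\,dx,
\]
so $\mathcal{H}^{\eps}$ dominates the left-hand side of \eqref{main-ineq}. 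Hence it suffices to prove $\mathcal{H}^{\eps}(t)\le C(T_*)\sqrt{\eps}$, and the conditions $(\mathcal{A}1)$--$(\mathcal{A}3)$ are precisely what is needed to guarantee $\mathcal{H}^{\eps}(0)=\mathcal{O}(\sqrt{\eps})$.

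Next I would differentiate $\mathcal{H}^{\eps}$ in time. For $\frac{d}{dt}\int \mathcal{F}(f^{\eps})\,dx$ I use the kinetic entropy inequality \eqref{kinetic-ineq} in differential form, which supplies the strong dissipation $-\tfrac1{\eps}\mathcal{D}_{\eps}(f^{\eps})$ and the friction loss $-\lambda\int|v|^2 f^{\eps}$; for the reference terms I use that $U^{\eps}$ solves the exact moment system (continuity holds exactly, while the momentum balance carries the Reynolds stress $\mathbb{S}^{\eps}=\int (v-u^{\eps})\otimes(v-u^{\eps})f^{\eps}\,dv$) and that $\bar U$ solves \eqref{PE} classically. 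The friction contributions combine cleanly: a short computation shows they sum to exactly $-2\lambda\,\mathcal{H}^{\eps}\le 0$ and may be discarded. By the standard Dafermos--DiPerna relative-entropy algebra the remaining convective terms produce a quadratic term bounded by $C\|\nabla u\|_{L^\infty}\int\frac{\rho^{\eps}}{2}|u^{\eps}-u|^2\,dx\le C\mathcal{H}^{\eps}$, plus a Reynolds-stress remainder bounded (since $\mathbb{S}^{\eps}\succeq 0$, so $|\mathbb{S}^{\eps}|\le \mathrm{tr}\,\mathbb{S}^{\eps}$) by $C\|\nabla u\|_{L^\infty}\int |\mathbb{S}^{\eps}|\,dx\le C\,\mathcal{D}_{\eps}(f^{\eps})$. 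The decisive feature of the strong-alignment regime is that this remainder carries no $\tfrac1{\eps}$ factor while the dissipation does, so for $\eps$ small the dissipation absorbs it. This yields $\frac{d}{dt}\mathcal{H}^{\eps}\le C\mathcal{H}^{\eps}$, and Gronwall with $\mathcal{H}^{\eps}(0)=\mathcal{O}(\sqrt{\eps})$ gives \eqref{main-ineq}.

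For the convergences I first observe that \eqref{kinetic-ineq} forces $\int_0^{T_*}\mathcal{D}_{\eps}(f^{\eps})\,ds=\mathcal{O}(\eps)$; combining this with \eqref{main-ineq} and $\int|v-u|^2 f^{\eps}\,dv=\int|v-u^{\eps}|^2 f^{\eps}\,dv+\rho^{\eps}|u^{\eps}-u|^2$ gives the concentration estimate $\int_0^{T_*}\!\int\!\int|v-u|^2 f^{\eps}\,dv\,dx\,dt=\mathcal{O}(\sqrt{\eps})$. For $\psi$ as in \eqref{dirac-con}, the Lipschitz bound $|\psi(v)-\psi(u)|\le\|\nabla\psi\|_{L^\infty}|v-u|$ and Cauchy--Schwarz then yield $\int f^{\eps}\psi(v)\,dv-\rho^{\eps}\psi(u)\to 0$ in $L^1$, and similarly $\rho^{\eps}u^{\eps}-\rho^{\eps}u$, $\rho^{\eps}u^{\eps}\otimes u^{\eps}-\rho^{\eps}u\otimes u$ and $\int|v|^2 f^{\eps}\,dv-\rho^{\eps}|u|^2$ all tend to $0$ in $L^1$. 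Mass conservation and the energy bound give uniform bounds in $L^{\infty}(0,T_*;L^1)$, hence weak-$*$ compactness in $\mathcal{M}([0,T_*]\times\bbr^d)$.

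It remains to identify the weak limit of $\rho^{\eps}$, and here lies the main obstacle: since \eqref{entropy-11} is not strictly convex in $\rho$, the relative entropy $\mathcal{H}^{\eps}$ gives no control on $\rho^{\eps}-\rho$, so the density convergence cannot be read off from \eqref{main-ineq}. Instead I would exploit the exact continuity equation: setting $g^{\eps}=\rho^{\eps}-\rho$ and $R^{\eps}=\rho^{\eps}(u^{\eps}-u)$, subtracting the two continuity laws gives $\partial_t g^{\eps}+\nabla\cdot(g^{\eps}u)=-\nabla\cdot R^{\eps}$, where $\|R^{\eps}\|_{L^1}\le(\int\rho^{\eps})^{1/2}(\int\rho^{\eps}|u^{\eps}-u|^2)^{1/2}=\mathcal{O}(\eps^{1/4})$ by \eqref{main-ineq} and $g^{\eps}(0)=\mathcal{O}(\sqrt{\eps})$ by $(\mathcal{A}2)$. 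Testing this linear transport equation, with the smooth bounded field $u$, against solutions of the dual backward problem $\partial_t\varphi+u\cdot\nabla\varphi=\chi$, $\varphi(T_*)=0$, forces $g^{\eps}\rightharpoonup 0$, i.e. $\rho^{\eps}\rightharpoonup\rho$. Feeding this back, and using that $u$, $u\otimes u$ and $\psi(u)$ are fixed bounded continuous functions, the strong $L^1$ estimates above upgrade to the weak convergences \eqref{w-conv} and \eqref{dirac-con}, and uniqueness of the limit makes the whole family converge. The genuinely delicate technical points I expect are the rigorous justification of the relative-entropy inequality for a mere weak solution $f^{\eps}$ on the unbounded domain $\bbr^d$ (approximating the Euler-generated weights $D\mathcal{E}(\bar U)$ by compactly supported test functions and controlling the tails via the mass/energy bounds) and the admissibility of the dual test function $\varphi$.
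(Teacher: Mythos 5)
Your proposal is correct, and at its core it is the same strategy as the paper: a relative--entropy/Gronwall argument yielding \eqref{main-ineq}, driven by the kinetic entropy inequality \eqref{kinetic-ineq} and the initial-data assumptions $(\mathcal{A}1)$--$(\mathcal{A}3)$, followed by mass-conservation compactness, the $L^1$-smallness of $\rho^{\eps}(u^{\eps}-u)$ (order $\eps^{1/4}$ by Cauchy--Schwarz), and a duality argument for the linear transport equation with the smooth field $u$ to identify the density limit; your convergence section, including the backward dual test function and the Lipschitz estimate for \eqref{dirac-con}, matches the paper's almost verbatim. Where you genuinely deviate is in the bookkeeping of the entropy estimate, and your variants are improvements in places. (i) You work with the kinetic relative entropy $\mathcal{H}^{\eps}=\int_{\bbr^d}\mathcal{E}(U^{\eps}|U)\,dx+\tfrac12\int_{\bbr^{2d}}|v-u^{\eps}|^2f^{\eps}\,dv\,dx$, whereas the paper keeps only the macroscopic part $\int\mathcal{E}(U^{\eps}|U)\,dx$ and reinserts the kinetic energy through the minimization property $\mathcal{E}(U^{\eps})\le\mathcal{F}(f^{\eps})$ and a separate lemma pairing $\int(\mathcal{F}(f^{\eps})(t)-\mathcal{F}(f^{\eps}_0))\,dx$ with the friction terms. (ii) Your claim that the friction contributions sum \emph{exactly} to $-2\lambda\mathcal{H}^{\eps}\le 0$ checks out, and it is cleaner than the paper's route, which discards $-\lambda\int\rho^{\eps}|u-u^{\eps}|^2$, bounds $\lambda\int\rho^{\eps}|u^{\eps}|^2$ by $\lambda\int\int|v|^2f^{\eps}$, and invokes \eqref{kinetic-ineq} a second time. (iii) For the Reynolds stress $\int(v-u^{\eps})\otimes(v-u^{\eps})f^{\eps}\,dv$ you use positive semidefiniteness to bound its norm by its trace and absorb $C\mathcal{D}_{\eps}(f^{\eps})$ into the dissipation $-\tfrac1\eps\mathcal{D}_{\eps}(f^{\eps})$; the paper instead applies Cauchy--Schwarz against the kinetic energy, obtaining only $\big(\int_0^t\mathcal{D}_{\eps}\,ds\big)^{1/2}=\mathcal{O}(\sqrt{\eps})$ for this consistency term, where your bound shows it is actually $\mathcal{O}(\eps)$ --- though both give the same final rate $\mathcal{O}(\sqrt{\eps})$, which is dictated by $(\mathcal{A}1)$ in either argument. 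The caveats you flag at the end --- the kinetic inequality is only available in integrated form (so your Gronwall step should be run in integrated rather than differential form), and the weights $D\mathcal{E}(U)$ and the dual test function are not compactly supported --- are shared by the paper's own proof and do not constitute gaps relative to it.
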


\section{Proof of Theorem \ref{main thm}} 
In this section, we present the proof of the Theorem \ref{main thm}. We first use the relative entropy method
to derive a relative entropy inequality \eqref{main-ineq}, which underlies the proof of the convergence \eqref{w-conv}. 
To use the relative entropy method, we consider the entropy $\mathcal{E}(\rho, u)=\rho\frac{|u|^2}{2}$, as mentioned in Introduction. Since $\mathcal{E}(\rho, u)$ is not strictly convex with respect to the density $\rho$, we will not be able to get the strong convergence of $\rho$ by the relative entropy method with the entropy \eqref{entropy-11}. Thus, after obtaining \eqref{main-ineq}, we will use the mass conservation law and \eqref{main-ineq} to get the weak convergence of $\rho$.  
\subsection{Relative entropy estimates} In this part, we show the inequality \eqref{main-ineq}.\\
We begin by defining the following notations: 
\[
P=\rho u,\quad U={\rho \choose P},\quad A(U)={ P^T \choose \frac{P\otimes P}{\rho}},\quad F(U)={0 \choose - P},
\]
Then, we can rewrite \eqref{PE} as a  simpler form, that is the system of conservation laws of the form
\beq\label{system}
\partial_t U + \mbox{div}_x A(U) = \lambda F(U).
\eeq
By the theory of conservation laws, the system \eqref{system} has a convex entropy $\mathcal{E}(\rho, u)=\rho\frac{|u|^2}{2}$ with entropy flux $G$ given as a solution of
\beq\label{entropy-flux}
\partial_{U_i}  G_{j} (U) = \sum_{k=1}^{d+1}\partial_{U_k} \mathcal{E}(U) \partial_{U_i}  A_{kj} (U),\quad 1\le i\le d+1,\quad 1\le j\le d.
\eeq

 introduce the relative entropy and relative flux:
\begin{align}
\begin{aligned}\label{relative}
&\mathcal{E}(V|U)=\mathcal{E}(V)-\mathcal{E}(U)- D\mathcal{E}(U)\cdot(V-U),\\
&{A}(V|U)={A}(V)-{A}(U)- D{A}(U)\cdot(V-U),
\end{aligned}
\end{align}
where $D\mathcal{E}(U)$ and $D{A}(U)$ denote the gradients with respect to $U$, and $D{A}(U)\cdot(V-U)$ is a matrix defined as
\[
(D{A}(U)\cdot(V-U))_{ij} = \sum_{k=1}^{d+1} \partial_{U_k}{A}_{ij}(U)(V_k-U_k),\quad 1\le i\le d+1,\quad 1\le j\le d.
\]
Since $\mathcal{E}(U)=\frac{|P|^2}{2\rho}$, we have
\[
D\mathcal{E}(U) = { -\frac{|P|^2}{2\rho^2} \choose  \frac{P}{\rho}} = { -\frac{|u|^2}{2} \choose  u}.
\]
If we consider $V={q \choose qw}$, then we have
\begin{align}
\begin{aligned}\label{relative-e}
\mathcal{E}(V|U)&=\frac{q}{2}|w|^2 -\frac{\rho}{2}|u|^2 +\frac{|u|^2}{2} (q-\rho) - u (qw-\rho u)\\
&=\frac{q}{2}|u-w|^2.
\end{aligned}
\end{align}
The next proposition gives a key formulation to get the relative entropy inequality for the system of conservation laws \eqref{system}.  
\begin{proposition}\label{prop-key} 
Let $U$ be a strong solution of \eqref{system} and $\displaystyle V$ be a smooth function. Then, the following equality holds
\begin{align}
\begin{aligned} \label{key ineq}
\frac{d}{dt}\int_{\bbr^d}\mathcal{E}(V|U) dx & = \frac{d}{dt}\int_{\bbr^d}\mathcal{E}(V) dx-\int_{\bbr^d} \nabla_xD\mathcal{E}(U): A(V|U) dx\\
& \quad-\int_{\bbr^d} D\mathcal{E}(U) \cdot[\partial_tV + \mbox{div}_x A(V) -\lambda F(V)] dx\\
&\quad -\lambda \int_{\bbr^d} (D^2\mathcal{E}(U) F(U)\cdot (V-U) +  D\mathcal{E}(U) \cdot F(V) ) dx
\end{aligned}
\end{align}
\end{proposition}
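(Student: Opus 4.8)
The plan is to prove the identity \eqref{key ineq} by direct differentiation of the relative entropy, decomposing it into the three defining pieces from \eqref{relative} and then using the fact that $U$ solves \eqref{system}. First I would write
\[
\frac{d}{dt}\int_{\bbr^d}\mathcal{E}(V|U)\,dx = \frac{d}{dt}\int_{\bbr^d}\mathcal{E}(V)\,dx - \frac{d}{dt}\int_{\bbr^d}\mathcal{E}(U)\,dx - \frac{d}{dt}\int_{\bbr^d} D\mathcal{E}(U)\cdot(V-U)\,dx,
\]
so that the target reduces to showing that the last two terms on the right reproduce the three remaining terms in \eqref{key ineq}. The first term $\frac{d}{dt}\int \mathcal{E}(V)\,dx$ is already isolated on both sides, so it can be set aside immediately.

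Next I would handle the pure-$U$ contribution $-\frac{d}{dt}\int \mathcal{E}(U)\,dx$ using the entropy balance for \eqref{system}. Since $U$ is a strong solution, $\partial_t U = -\mbox{div}_x A(U) + \lambda F(U)$, and multiplying by $D\mathcal{E}(U)$ gives, via the entropy-flux compatibility relation \eqref{entropy-flux}, the standard entropy identity $\partial_t \mathcal{E}(U) + \mbox{div}_x G(U) = \lambda D\mathcal{E}(U)\cdot F(U)$. After integrating in $x$ (the flux term integrates to zero on $\bbr^d$ under the decay assumed for the strong solution), this produces $\frac{d}{dt}\int \mathcal{E}(U)\,dx = \lambda \int D\mathcal{E}(U)\cdot F(U)\,dx$.

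The heart of the computation is the cross term $\frac{d}{dt}\int D\mathcal{E}(U)\cdot(V-U)\,dx$, which I would expand by the product rule into a piece differentiating $D\mathcal{E}(U)$ and a piece differentiating $(V-U)$. In the first piece I substitute $\partial_t U$ from the equation and move spatial derivatives off $A(U)$ onto $D^2\mathcal{E}(U)$ by integration by parts; in the second piece I insert $\partial_t V$ directly and rewrite $\partial_t U$ again. The strategy is then to add and subtract the terms $\mbox{div}_x A(V)$ and $D A(U)\cdot(V-U)$ so that the combinations assemble into $A(V|U)$ through its definition in \eqref{relative}, using the Hessian relation $\nabla_x D\mathcal{E}(U):D A(U)\cdot(V-U)$ to recover the exact relative-flux structure. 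Collecting the $\lambda$-terms from both the $U$-balance and the differentiation of $D\mathcal{E}(U)$ yields the final friction line $-\lambda\int (D^2\mathcal{E}(U)F(U)\cdot(V-U) + D\mathcal{E}(U)\cdot F(V))\,dx$.

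The main obstacle I anticipate is the careful bookkeeping in the integration-by-parts step that converts $\int D^2\mathcal{E}(U)\cdot\mbox{div}_x A(U)\cdot(V-U)$ and the matching $V$-divergence term into the clean relative-flux contraction $-\int \nabla_x D\mathcal{E}(U):A(V|U)\,dx$; this requires using the symmetry of $D^2\mathcal{E}(U)$ and the compatibility condition \eqref{entropy-flux} precisely, and it is where the cancellations that make the flux \emph{relative} (rather than just $A(V)$) must be verified term by term. All boundary terms vanish because $U$ is a strong decaying solution and $V$ is smooth, so no additional regularity input beyond what is assumed is needed; the identity is purely algebraic once the calculus is organized.
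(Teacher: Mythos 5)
Your proposal is correct and follows essentially the same route as the paper's Appendix proof: direct differentiation of $\int \mathcal{E}(V|U)\,dx$, substitution of \eqref{system} for $\partial_t U$, the symmetry/integration-by-parts identity converting $\int D^2\mathcal{E}(U)\,\mbox{div}_x A(U)\cdot (V-U)\,dx$ into $\int \nabla_x D\mathcal{E}(U):DA(U)\cdot(V-U)\,dx$, adding and subtracting $\mbox{div}_x A(V)$ and $\lambda F(V)$ to form the residual of $V$, and the compatibility relation \eqref{entropy-flux} to annihilate the leftover $\int \nabla_x D\mathcal{E}(U):A(U)\,dx$ term. The only (cosmetic) difference is bookkeeping: the paper merges $-D\mathcal{E}(U)\cdot\partial_t U$ with $-D\mathcal{E}(U)\cdot\partial_t(V-U)$ into $-D\mathcal{E}(U)\cdot\partial_t V$ so the pure-$U$ entropy balance never appears explicitly, whereas you compute $\frac{d}{dt}\int \mathcal{E}(U)\,dx = \lambda\int D\mathcal{E}(U)\cdot F(U)\,dx$ separately and let these $\lambda$-terms cancel against the corresponding contribution from the cross term.
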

The derivation for \eqref{key ineq} can be found in \cite{K-M-T-2,M-V}. We present its proof in Appendix for the reader's convenience.

We now apply \eqref{key ineq} to our issue together with the following macroscopic quantities corresponding to the weak solution $f^{\eps}$ obtained in previous section:
\[
P^{\eps}=\rho^{\eps}u^{\eps} ,\quad  U^{\eps}={\rho^{\eps} \choose P^{\eps}}
\]
where $\rho^{\eps}=\int_{\bbr^d} f^{\eps} dv$, $\rho^{\eps}u^{\eps} = \int_{\bbr^d} v f^{\eps} dv$.

\begin{lemma}\label{lem-ineq} 
Assume that the initial datas $U^{\eps}_0$ and $U_0$ satisfy \eqref{weak-initial}, \eqref{strong-initial} and $({\mathcal A}1)$-$({\mathcal A}3)$.
Let $U^{\eps}$ be a weak solution of \eqref{vlasov} satisfying \eqref{kinetic-ineq} and $U$ the classical solution of \eqref{PE}. Then, the following inequality holds
\begin{align}
\begin{aligned}\label{distance} 
&\int_{\bbr^d}\mathcal{E}(U^{\eps}|U)(t) dx\\
&\quad \le C\sqrt{\eps}+ C \int_0^t  \int_{\bbr^d} \mathcal{E}(U^{\eps}|U) dxds + \int_{\bbr^d}( \mathcal{F}(f^{\eps})(t)- \mathcal{F}(f^{\eps}_0))dx\\
& \qquad  - \int_0^t \int_{\bbr^d} D\mathcal{E}(U)  \cdot [\partial_t U^{\eps} + \mbox{div}_x A(U^{\eps}) -\lambda F(U^{\eps})] dxds \\
&\qquad -\lambda\int_0^t \int_{\bbr^d} (D^2\mathcal{E}(U) F(U)  \cdot(U^{\eps}-U) +  D\mathcal{E}(U) \cdot F(U^{\eps}) ) dx ds,
\end{aligned}
\end{align}
where the integrand $\partial_t U^{\eps} + \mbox{div}_x A(U^{\eps}) -\lambda F(U^{\eps})$ is regarded in the sense of distributions.
\end{lemma}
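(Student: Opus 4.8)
The plan is to apply the key identity \eqref{key ineq} of Proposition \ref{prop-key} with the choice $V=U^{\eps}$ and the strong solution $U$ of \eqref{PE}, and then integrate in time over $[0,t]$. Since $U^{\eps}$ is only the (low-regularity) moment field of the weak solution $f^{\eps}$, \eqref{key ineq} cannot be used verbatim; I would first mollify $U^{\eps}$, apply the smooth identity, and pass to the limit, keeping in mind that $D\mathcal{E}(U)$ is a smooth, bounded test field on $[0,T_*]$ (because $U$ is the classical solution and $\rho$ stays positive and bounded on $[0,T_*]$). After integration this yields
\[
\int_{\bbr^d}\mathcal{E}(U^{\eps}|U)(t)\,dx=\int_{\bbr^d}\mathcal{E}(U^{\eps}|U)(0)\,dx+\int_{\bbr^d}\big(\mathcal{E}(U^{\eps})(t)-\mathcal{E}(U^{\eps})(0)\big)\,dx-\int_0^t\!\!\int_{\bbr^d}\nabla_xD\mathcal{E}(U):A(U^{\eps}|U)\,dx\,ds + R,
\]
where $R$ collects exactly the last two lines of \eqref{distance}, i.e.\ the ``defect'' term $-\int_0^t\!\int D\mathcal{E}(U)\cdot[\partial_s U^{\eps}+\mathrm{div}_x A(U^{\eps})-\lambda F(U^{\eps})]$ and the friction term; these I would leave untouched, to be estimated later using the kinetic dissipation $\mathcal{D}_{\eps}$. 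The content of the lemma is then to process the three explicit terms above.

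For the energy term I would use Jensen's inequality (equivalently Cauchy--Schwarz against the probability measure $f^{\eps}\,dv/\rho^{\eps}$): since $v\mapsto |v|^2/2$ is convex,
\[
\mathcal{E}(U^{\eps})=\rho^{\eps}\frac{|u^{\eps}|^2}{2}=\frac{\big|\int v f^{\eps}dv\big|^2}{2\int f^{\eps}dv}\le \int \frac{|v|^2}{2}f^{\eps}\,dv=\mathcal{F}(f^{\eps}),
\]
pointwise in $x$. This lets me bound $\int\mathcal{E}(U^{\eps})(t)\,dx\le \int\mathcal{F}(f^{\eps})(t)\,dx$, producing the $+\int\mathcal{F}(f^{\eps})(t)\,dx$ contribution in the right direction. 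For the initial terms I would combine $\int\mathcal{E}(U^{\eps}|U)(0)\,dx-\int\mathcal{E}(U^{\eps})(0)\,dx$ using the definition \eqref{relative}, which gives $-\int\mathcal{E}(U_0)\,dx-\int D\mathcal{E}(U_0)\cdot(U^{\eps}_0-U_0)\,dx$, and then split off $-\int\mathcal{F}(f^{\eps}_0)\,dx$. The leftover $\int(\mathcal{F}(f^{\eps}_0)-\mathcal{E}(U_0))\,dx$ is $\mathcal{O}(\sqrt{\eps})$ by $(\mathcal A1)$, while expanding $D\mathcal{E}(U_0)\cdot(U^{\eps}_0-U_0)=\tfrac{|u_0|^2}{2}(\rho^{\eps}_0-\rho_0)+\rho^{\eps}_0\,u_0\cdot(u^{\eps}_0-u_0)$ and using $(\mathcal A2)$--$(\mathcal A3)$ together with $u_0\in H^{s+1}\subset L^2\cap L^\infty$ and $\rho^{\eps}_0\in L^1$ shows this term is $\mathcal{O}(\sqrt{\eps})$ as well. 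Together these account for the $C\sqrt{\eps}$ on the right of \eqref{distance}.

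For the flux term I would exploit the explicit structure of the pressureless system. Writing $V={q\choose qw}$, a direct computation analogous to \eqref{relative-e} gives $A(V|U)=q\,(w-u)\otimes(w-u)$ in the momentum block and $0$ in the density block, so that $|A(V|U)|=q|w-u|^2=2\,\mathcal{E}(V|U)$. Hence
\[
\Big|\int_{\bbr^d}\nabla_xD\mathcal{E}(U):A(U^{\eps}|U)\,dx\Big|\le 2\,\|\nabla_xD\mathcal{E}(U)\|_{L^\infty}\int_{\bbr^d}\mathcal{E}(U^{\eps}|U)\,dx\le C\int_{\bbr^d}\mathcal{E}(U^{\eps}|U)\,dx,
\]
with $C$ finite and uniform on $[0,T_*]$ because $u\in C^1$ and $\rho$ is bounded below. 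Integrating in time produces the Gr\"onwall term $C\int_0^t\!\int\mathcal{E}(U^{\eps}|U)\,dx\,ds$. Assembling the three estimates and keeping $R$ intact yields \eqref{distance}.

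The main obstacle is not any single estimate but the \emph{justification of \eqref{key ineq} for the non-smooth field} $V=U^{\eps}$: one must make sense of $\mathcal{E}(U^{\eps})$, of the time-boundary values, and above all of the pairing $\int D\mathcal{E}(U)\cdot[\partial_s U^{\eps}+\mathrm{div}_x A(U^{\eps})-\lambda F(U^{\eps})]$, which is precisely the term the lemma declares to be ``in the sense of distributions.'' I expect the cleanest route is to test the weak formulation \eqref{weak-form} of \eqref{vlasov} against components of the smooth field $D\mathcal{E}(U)$ (and a mollified version of $U^{\eps}$), so that all the singular $1/\eps$ contributions are packaged into $R$ and no derivative ever falls on $U^{\eps}$; controlling that defect is deferred to the remainder of the proof of Theorem \ref{main thm}.
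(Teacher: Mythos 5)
Your proposal is correct and follows essentially the same route as the paper: apply Proposition \ref{prop-key} with $V=U^{\eps}$ and integrate in time, use the Jensen/Cauchy--Schwarz minimization $\mathcal{E}(U^{\eps})\le\mathcal{F}(f^{\eps})$ to pass to the kinetic energy, bound the initial-data terms by $C\sqrt{\eps}$ via $({\mathcal A}1)$--$({\mathcal A}3)$, compute $A(U^{\eps}|U)=\rho^{\eps}(u^{\eps}-u)\otimes(u^{\eps}-u)$ (momentum block) to get the Gr\"onwall term, and leave the distributional defect and friction terms untouched. Your regrouping of the initial terms through $-D\mathcal{E}(U_0)\cdot(U^{\eps}_0-U_0)$ is algebraically equivalent to the paper's split into $\mathcal{E}(U^{\eps}_0|U_0)$ and $\mathcal{E}(U_0)-\mathcal{E}(U^{\eps}_0)$, and your explicit attention to justifying \eqref{key ineq} for the non-smooth $U^{\eps}$ (which the paper passes over silently) only strengthens the argument.
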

\begin{proof}
First of all, it follows from \eqref{key ineq} that 
\begin{align}
\begin{aligned} \label{ineq-1}
\int_{\bbr^d}\mathcal{E}(U^{\eps}|U)(t) dx&\le \int_{\bbr^d}\mathcal{E}(U^{\eps}_0|U_0) dx + \int_{\bbr^d}(\mathcal{E}(U^{\eps})(t) - \mathcal{E}(U^{\eps}_0))dx\\
&\quad   -\int_0^t\int_{\bbr^d} \nabla_xD\mathcal{E}(U): A(U^{\eps}|U) dx ds\\
&\quad -\int_0^t \int_{\bbr^d} D\mathcal{E}(U)  \cdot[\partial_t U^{\eps} + \mbox{div}_x A(U^{\eps}) -\lambda F(U^{\eps})] dxds\\
& \quad -\lambda \int_0^t \int_{\bbr^d} (D^2\mathcal{E}(U) F(U)  \cdot(U^{\eps}-U) +  D\mathcal{E}(U)  \cdot F(U^{\eps}) ) dx ds\\
&:= \sum_{k=1}^5 \mathcal{I}_k.
\end{aligned}
\end{align}
We estimate the three terms $\mathcal{I}_1$, $\mathcal{I}_2$ and $\mathcal{I}_3$ above as follows.\\
$\bullet$ {\bf Estimate of $\mathcal{I}_1$}
We use \eqref{relative-e}, assumption $({\mathcal A}3)$ and the mass conservation to get
\[
\mathcal{I}_1 = \frac{1}{2}\int_{\bbr^d}\rho^{\eps}_0|u^{\eps}_0 - u_0|^2 dx \le \mathcal{O}(\eps)\int_{\bbr^d}\rho^{\eps}_0dx \le C\eps.
\]
$\bullet$ {\bf Estimate of $\mathcal{I}_2$}
We decompose $\mathcal{I}_2$ as
\begin{align}
\begin{aligned} \label{I-2}
 \mathcal{I}_2 &=  \int_{\bbr^d}(\mathcal{E}(U^{\eps})(t) - \mathcal{F}(f^{\eps})(t))dx +  \int_{\bbr^d}( \mathcal{F}(f^{\eps})(t)- \mathcal{F}(f^{\eps}_0))dx\\
 &\quad + \int_{\bbr^d}( \mathcal{F}(f^{\eps}_0)- \mathcal{E}(U_0))dx +\int_{\bbr^d}( \mathcal{E}(U_0)- \mathcal{E}(U^{\eps}_0))dx\\
 &:=  \sum_{k=1}^4 \mathcal{I}_2^k.
\end{aligned}
\end{align}
We first notice that H$\ddot{\mbox{o}}$lder's inequality yields 
\begin{align}
\begin{aligned}\label{pre-mini}
\rho^{\eps} |u^{\eps}|^2 =\frac{|\rho^{\eps} u^{\eps}|^2}{\rho^{\eps}}=\frac{\Big|\int_{\bbr^d} v f^{\eps} dv \Big|^2}{\int_{\bbr^d}  f^{\eps} dv} \le \int_{\bbr^d} |v|^2 f^{\eps} dv.
\end{aligned}
\end{align}
Thanks to \eqref{pre-mini}, we have the minimization property 
\beq\label{mini}
\mathcal{E}(U^{\eps})\le \mathcal{F}(f^{\eps}), 
\eeq
which yields $\mathcal{I}_2^1 \le 0$.\\
The assumption $(\mathcal{A}1)$ directly gives $\mathcal{I}_2^3 = \mathcal{O}(\sqrt{\eps}) $.\\
We use the assumptions $(\mathcal{A}2)$ and $(\mathcal{A}3)$ to get
\[
\mathcal{I}_2^4 = \frac{1}{2} \int_{\bbr^d} \rho_0 (|u_0|^2 -|u_0^{\eps}|^2) + (\rho_0 - \rho_0^{\eps}) |u_0^{\eps}|^2 dx \le C\sqrt{\eps},
\]
where we have used $u_0^{\eps}=\mathcal{O}(1)$ due to $(\mathcal{A}2)$. Thus we have
\[
\mathcal{I}_2\le C\sqrt{\eps} + \int_{\bbr^d}( \mathcal{F}(f^{\eps})(t)- \mathcal{F}(f^{\eps}_0))dx
\]
$\bullet$ {\bf Estimate of $\mathcal{I}_3$} We first compute the relative flux as follows. Since
\[
A(U^{\eps}|U)=A(U^{\eps})-A(U)-DA(U)\cdot(U^{\eps}-U),
\]
and
\begin{align*}
\begin{aligned}
DA(U)\cdot(U^{\eps}-U) &= D_{\rho}A(U) (\rho^{\eps}-\rho) + D_{P_i} A(U) (P_i^{\eps} - P_i) \\
&={ ({P^{\eps}} -P)^T \choose -\frac{\rho^{\eps}-\rho}{\rho^2} P\otimes P +\frac{1}{\rho} P\otimes (P^{\eps} -P ) +   \frac{1}{\rho} (P^{\eps} -P )\otimes P },
\end{aligned}
\end{align*}
we have
\begin{align*}
\begin{aligned}
A(U^{\eps}|U)&={ 0 \choose  \frac{1}{\rho^{\eps}}P^{\eps}\otimes P^{\eps}- \frac{1}{\rho}P\otimes P +\frac{\rho^{\eps}-\rho}{\rho^2} P\otimes P -\frac{1}{\rho} P\otimes (P^{\eps} -P ) -\frac{1}{\rho} (P^{\eps} -P )\otimes P }\\
&= {0 \choose \rho^{\eps} (u^{\eps} -u)\otimes (u^{\eps} -u)}.
\end{aligned}
\end{align*}
Then, by using $D\mathcal{E}(U) = {-\frac{|u|^2}{2} \choose u}$, we have 
\begin{align*}
\begin{aligned}
\mathcal{I}_3 &= \int_0^t\int_{\bbr^d} \rho^{\eps} (u^{\eps} -u)\otimes (u^{\eps} -u) :  \nabla_x u dx ds \\
&\le C\|u\|_{L^{\infty}(0,T_*;W^{1,\infty}(\bbr^d))} \int_0^t\int_{\bbr^d} \rho^{\eps}|u^{\eps} -u|^2  dx ds.
\end{aligned}
\end{align*}
Therefore, by $\mathcal{E}(U^{\eps}|U)=\frac{\rho^{\eps}}{2} |u^{\eps} - u|^2$, we have
\[
\mathcal{I}_3 \le C \|u\|_{L^{\infty}(0,T_*;W^{1,\infty}(\bbr^d))} \int_0^t \int_{\bbr^d} \mathcal{E}(U^{\eps}|U) dxds.
\]
\end{proof}

To get the relative entropy inequality \eqref{main-ineq} from Lemma \ref{lem-ineq}, we need to estimate the last three terms in the right hand side of \eqref{distance} as follows.

\begin{lemma}\label{lem-remain} 
Under the same hypotheses as Lemma \ref{lem-ineq}, we have
\beq \label{consist}
\Big|\int_0^t\int_{\bbr^d} D\mathcal{E}(U)\cdot [\partial_tU^{\eps} + \mbox{div}_x A(U^{\eps}) -\lambda F(U^{\eps})] dx ds \Big| \le C\sqrt{\eps},
\eeq
and
\beq\label{last term}
\int_{\bbr^d}( \mathcal{F}(f^{\eps})(t)- \mathcal{F}(f^{\eps}_0))dx -\lambda\int_0^t \int_{\bbr^d} (D^2\mathcal{E}(U) F(U) (U^{\eps}-U) +  D\mathcal{E}(U) F(U^{\eps}) ) dx ds \le 0.
\eeq
\end{lemma}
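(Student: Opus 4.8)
The two inequalities are of different natures, so the plan is to treat them separately. Both rely on first identifying the \emph{consistency error} $\partial_t U^{\eps} + \mbox{div}_x A(U^{\eps}) - \lambda F(U^{\eps})$ through the velocity moments of \eqref{vlasov}. Integrating \eqref{vlasov} in $v$ against $1$ and against $v$ — which is justified in the sense of distributions by the integrability \eqref{weak-reg}, in particular $|v|^2 f^{\eps}\in L^{\infty}(0,T;L^1)$ — the $\nabla_v$-terms integrate to zero and I obtain the exact continuity equation $\partial_t\rho^{\eps}+\mbox{div}_x P^{\eps}=0$ together with the momentum balance
\begin{equation*}
\partial_t P^{\eps} + \mbox{div}_x\int_{\bbr^d} v\otimes v\, f^{\eps}\,dv + \lambda P^{\eps} = 0 .
\end{equation*}
The decisive point is that the local alignment force drops out of the momentum equation because $\int_{\bbr^d}(u^{\eps}-v)f^{\eps}\,dv=0$ by the very definition of $u^{\eps}$; this is exactly what removes the $1/\eps$ singularity. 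Writing $\int v\otimes v\,f^{\eps}\,dv = \frac{P^{\eps}\otimes P^{\eps}}{\rho^{\eps}} + R^{\eps}$ with the Reynolds stress $R^{\eps}:=\int_{\bbr^d}(v-u^{\eps})\otimes(v-u^{\eps})f^{\eps}\,dv\ge 0$, this reads
\begin{equation*}
\partial_t U^{\eps} + \mbox{div}_x A(U^{\eps}) - \lambda F(U^{\eps}) = {0 \choose -\mbox{div}_x R^{\eps}}
\end{equation*}
in the sense of distributions.

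For \eqref{consist}, since $D\mathcal{E}(U)={-|u|^2/2 \choose u}$ and $U$ is the smooth solution, pairing with the identity above and integrating by parts in $x$ gives
\begin{equation*}
\int_0^t\int_{\bbr^d} D\mathcal{E}(U)\cdot\big[\partial_t U^{\eps} + \mbox{div}_x A(U^{\eps}) - \lambda F(U^{\eps})\big]\,dxds = \int_0^t\int_{\bbr^d}\nabla_x u : R^{\eps}\,dxds .
\end{equation*}
Since $R^{\eps}$ is symmetric positive semidefinite, $|R^{\eps}|\le \mbox{tr}\,R^{\eps}$ and $\int_{\bbr^d}\mbox{tr}\,R^{\eps}\,dx=\mathcal{D}_{\eps}(f^{\eps})$, so the left side of \eqref{consist} is bounded by $\|u\|_{L^{\infty}(0,T_*;W^{1,\infty}(\bbr^d))}\int_0^t\mathcal{D}_{\eps}(f^{\eps})\,ds$. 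The entropy inequality \eqref{kinetic-ineq} then yields $\frac{1}{\eps}\int_0^t\mathcal{D}_{\eps}(f^{\eps})\,ds\le\int_{\bbr^d}\mathcal{F}(f^{\eps}_0)\,dx\le C$, the last bound coming from $(\mathcal{A}1)$ and the smoothness of $(\rho_0,u_0)$. Hence $\int_0^t\mathcal{D}_{\eps}(f^{\eps})\,ds\le C\eps$ and \eqref{consist} follows, in fact with the sharper rate $C\eps\le C\sqrt{\eps}$.

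The estimate \eqref{last term} is then purely algebraic. Discarding the nonnegative dissipation term in \eqref{kinetic-ineq} gives
\begin{equation*}
\int_{\bbr^d}\big(\mathcal{F}(f^{\eps})(t)-\mathcal{F}(f^{\eps}_0)\big)\,dx \le -\lambda\int_0^t\int_{\bbr^{2d}}|v|^2 f^{\eps}\,dvdxds = -\lambda\int_0^t\int_{\bbr^d}\big(\rho^{\eps}|u^{\eps}|^2+\mbox{tr}\,R^{\eps}\big)\,dxds .
\end{equation*}
On the other hand, a direct computation of the Hessian $D^2\mathcal{E}(U)$ and of $F$ gives
\begin{equation*}
D^2\mathcal{E}(U) F(U)\cdot(U^{\eps}-U) + D\mathcal{E}(U)\cdot F(U^{\eps}) = \rho^{\eps}\big(|u|^2-2\,u\cdot u^{\eps}\big).
\end{equation*}
Adding the two damping contributions, the velocity terms combine into the perfect square $\rho^{\eps}|u^{\eps}-u|^2$, so the left-hand side of \eqref{last term} is bounded above by
\begin{equation*}
-\lambda\int_0^t\int_{\bbr^d}\big(\rho^{\eps}|u^{\eps}-u|^2 + \mbox{tr}\,R^{\eps}\big)\,dxds \le 0 ,
\end{equation*}
because $\lambda\ge 0$ and both integrands are nonnegative. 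This is the relative-entropy analogue of the damping identity $\frac{d}{dt}\mathcal{E}+2\lambda\mathcal{E}=0$ of Proposition \ref{prop-strong}.

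I expect the main obstacle to be the rigorous justification of the moment identities for the \emph{weak} solution $f^{\eps}$: the weights $1$ and $v$ are not admissible test functions in \eqref{weak-form}, being non-compactly supported in $v$, so the moment equations must be obtained by truncating in $v$ and passing to the limit using \eqref{weak-reg}, and the integration by parts producing $\nabla_x u:R^{\eps}$ must likewise be read in the distributional sense (here $R^{\eps}\in L^1$ and $u\in W^{1,\infty}$ suffice). Once the distributional identity for $\partial_t U^{\eps}+\mbox{div}_x A(U^{\eps})-\lambda F(U^{\eps})$ is secured, the two bounds reduce to the positivity of $R^{\eps}$, the perfect-square structure, and the dissipation control $\int_0^t\mathcal{D}_{\eps}(f^{\eps})\,ds\le C\eps$ from \eqref{kinetic-ineq}.
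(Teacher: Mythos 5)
Your proof is correct, and for the consistency estimate \eqref{consist} it takes a genuinely different --- and in fact sharper --- route than the paper. Both arguments start the same way: the velocity moments of \eqref{vlasov} give the continuity equation and a momentum balance in which the $1/\eps$ alignment force cancels because $\int_{\bbr^d}(u^{\eps}-v)f^{\eps}\,dv=0$, and one then pairs with $D\mathcal{E}(U)$ and integrates by parts in $x$. The difference is in how the resulting flux error is estimated. The paper writes it as $\int_{\bbr^d}(u^{\eps}\otimes u^{\eps}-v\otimes v)f^{\eps}\,dv$, splits $u^{\eps}\otimes u^{\eps}-v\otimes v=u^{\eps}\otimes(u^{\eps}-v)+(u^{\eps}-v)\otimes v$, and applies Cauchy--Schwarz in $v$ together with the minimization property \eqref{mini} and H\"older in time; this pairs one factor $\mathcal{D}_{\eps}^{1/2}$ with one factor $\mathcal{F}^{1/2}$, so it only yields $C\sqrt{T_*}\big(\int_0^t\mathcal{D}_{\eps}(f^{\eps})\,ds\big)^{1/2}\le C\sqrt{\eps}$. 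You instead observe the exact identity $\int_{\bbr^d}(u^{\eps}\otimes u^{\eps}-v\otimes v)f^{\eps}\,dv=-R^{\eps}$ with $R^{\eps}=\int_{\bbr^d}(v-u^{\eps})\otimes(v-u^{\eps})f^{\eps}\,dv\succeq 0$, so that $|R^{\eps}|\le \mbox{tr}\,R^{\eps}$ and $\int_{\bbr^d}\mbox{tr}\,R^{\eps}\,dx=\mathcal{D}_{\eps}(f^{\eps})$; this bounds the left-hand side of \eqref{consist} by $C\int_0^t\mathcal{D}_{\eps}(f^{\eps})\,ds\le C\eps$, a full power of $\eps$ instead of $\sqrt{\eps}$, with no Cauchy--Schwarz loss and no need for \eqref{mini}. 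For \eqref{last term} the two arguments are essentially the same algebra performed in a different order: the paper expands the damping terms as $-\lambda\int\rho^{\eps}|u-u^{\eps}|^2+\lambda\int\rho^{\eps}|u^{\eps}|^2$, discards the first (favorable) term, controls the second by $\lambda\int\int|v|^2f^{\eps}$ via \eqref{pre-mini}, and then invokes \eqref{kinetic-ineq}; you keep all terms and exhibit the combined bound $-\lambda\int_0^t\int_{\bbr^d}\big(\rho^{\eps}|u^{\eps}-u|^2+\mbox{tr}\,R^{\eps}\big)\,dxds\le 0$, which is marginally stronger and makes the dissipative structure explicit. Your closing caveat --- that the weights $1$ and $v$ are not admissible test functions in \eqref{weak-form}, so the moment identities for the weak solution must be justified by truncation in $v$ using \eqref{weak-reg} --- is a genuine technical point that the paper passes over silently ("it follows from \eqref{weak-form} that\dots"); it affects both proofs equally and does not compromise either.
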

\begin{proof}
$\bullet$ {\bf Estimate of \eqref{consist}} First of all, it follows from \eqref{weak-form} that
\begin{align*}
\begin{aligned} 
&\partial_t\rho^{\eps} + \nabla_x \cdot (\rho^{\eps} u^{\eps})=0,\\
&\partial_t(\rho^{\eps}u^{\eps}) + \nabla_x \cdot (\rho^{\eps} u^{\eps}\otimes u^{\eps}) +\lambda \rho^{\eps}u^{\eps} =\nabla_x\cdot \int_{\bbr^d}(u^{\eps}\otimes u^{\eps} - v\otimes v)f^{\eps}dv,\\
\end{aligned}
\end{align*}
in the distributional sense. This implies that
\begin{align*}
\begin{aligned} 
&\int_0^t\int_{\bbr^d} D\mathcal{E}(U) \cdot[\partial_tU^{\eps} + \mbox{div}_x A(U^{\eps}) -F(U^{\eps})] dx ds\\
&\hspace{2cm} = \int_0^t\int_{\bbr^d} D_P\mathcal{E}(U)\cdot \mbox{div}_x\Big(\int_{\bbr^d} (u^{\eps}\otimes u^{\eps} - v\otimes v)f^{\eps} dv \Big) dx ds\\
&\hspace{2cm} = -\int_0^t\int_{\bbr^d} \nabla_x D_P\mathcal{E}(U):\Big(\int_{\bbr^d} (u^{\eps}\otimes u^{\eps} - v\otimes v)f^{\eps} dv \Big)dx ds,
\end{aligned}
\end{align*}
Since $D_P\mathcal{E}(U) = u$ and $u^{\eps}\otimes u^{\eps} - v\otimes v=u^{\eps}\otimes (u^{\eps}-v) +(u^{\eps}- v)\otimes v$, we use H$\ddot{\mbox{o}}$lder's inequality to estimate
\begin{align*}
\begin{aligned} 
&\Big|\int_0^t\int_{\bbr^d} D\mathcal{E}(U) \cdot [\partial_tU^{\eps} + \mbox{div}_x A(U^{\eps})-F(U^{\eps})] dx ds \Big|\\
&\hspace{2cm} \le \int_0^t\int_{\bbr^d} |\nabla_x u| \Big|\int_{\bbr^d} (u^{\eps}\otimes u^{\eps} - v\otimes v)f^{\eps} dv \Big| dx ds\\
&\hspace{2cm} \le \|u\|_{L^{\infty}(0,T_*;W^{1,\infty})}\int_0^t\int_{\bbr^d} \Big|\int_{\bbr^d} (u^{\eps}\otimes (u^{\eps}-v) +(u^{\eps}- v)\otimes v)f^{\eps} dv \Big| dx ds\\
&\hspace{2cm} \le C\int_0^t \int_{\bbr^{2d}} (|u^{\eps}|+|v|) |u^{\eps}- v| f^{\eps} dv dx ds\\
&\hspace{2cm} \le C\int_0^t\Big( \underbrace{\int_{\bbr^{2d}} (|u^{\eps}|^2+ |v|^2) f^{\eps} dvdx}_{J} \Big)^{1/2} \Big( \int_{\bbr^{2d}} |u^{\eps}- v|^2 f^{\eps} dv dx\Big)^{1/2}  ds.
\end{aligned}
\end{align*}
By the minimization property \eqref{mini}, since
\[
J = \int_{\bbr^d} \rho^{\eps}|u^{\eps}|^2 dx +\int_{\bbr^{2d}} |v|^2 f^{\eps} dvdx \le 2\int_{\bbr^{2d}} |v|^2 f^{\eps} dvdx,
\]
the H$\ddot{\mbox{o}}$lder's inequality and entropy inequality \eqref{kinetic-ineq} yield
\begin{align*}
\begin{aligned} 
&\Big|\int_0^t\int_{\bbr^d} D\mathcal{E}(U) \cdot [\partial_tU^{\eps} + \mbox{div}_x A(U^{\eps})-F(U^{\eps})] dx ds \Big|\\
&\hspace{2cm} \le \Big( \int_0^t \int_{\bbr^d} \mathcal{F}(f^{\eps})(t) dx ds \Big)^{1/2}  \Big(  \int_0^t  \mathcal{D}_{\eps}( f^{\eps}) ds\Big)^{1/2}\\
&\hspace{2cm} \le \Big( t\int_{\bbr^d} \mathcal{F}(f^{\eps}_0) dx \Big)^{1/2}  \Big(  \int_0^t  \mathcal{D}_{\eps}( f^{\eps}) ds\Big)^{1/2}\\
&\hspace{2cm} \le C \sqrt{T_*}  \Big(  \int_0^t  \mathcal{D}_{\eps}( f^{\eps}) ds\Big)^{1/2} ,\quad t\le T_*,
\end{aligned}
\end{align*}
where $\int_{\bbr^d} \mathcal{F}(f^{\eps}_0) dx$ is bounded w.r.t. $\eps$ by the assumption $(\mathcal{A}1)$.\\
We use \eqref{kinetic-ineq} to estimate 
\beq\label{diss-zero}
\int_0^{t}  \mathcal{D}_{\eps}(f^{\eps})(s) ds \le \eps \int_{\bbr^d} \mathcal{F}(f^{\eps}_0) dx \le C\eps,
\eeq
which complete \eqref{consist}.\\
$\bullet$ {\bf Estimate of \eqref{last term}}
Since 
\[
D_{P}D_{\rho}\mathcal{E}(U) = - \frac{P}{\rho^2}= - \frac{u}{\rho} \quad \mbox{and}\quad  D_{P}D_{P}\mathcal{E}(U) =  \frac{1}{\rho}I,
\]
we have 
\begin{align*}
\begin{aligned} 
&-\lambda\int_0^t \int_{\bbr^d} (D^2\mathcal{E}(U) F(U) (U^{\eps}-U) +  D\mathcal{E}(U) F(U^{\eps}) ) dx ds \\
&\qquad = -\lambda\int_0^t \int_{\bbr^d} (|u|^2(\rho^{\eps}-\rho) -u(\rho^{\eps} u^{\eps}-\rho u) -\rho^{\eps} u u^{\eps}) dx ds \\
&\qquad = -\lambda\int_0^t \int_{\bbr^d} \rho^{\eps}| u -u^{\eps}|^2 dx ds +\lambda\int_0^t \int_{\bbr^d} \rho^{\eps}| u^{\eps}|^2 dx ds \\
&\qquad\le \lambda\int_0^t \int_{\bbr^d} \rho^{\eps}| u^{\eps}|^2 dx ds\\
&\qquad\le \lambda\int_0^t \int_{\bbr^{2d}} |v|^2 f^{\eps} dxdv ds\\
\end{aligned}
\end{align*}
where the last inequality is due to minimization property \eqref{pre-mini}.\\
Thus, we have
\begin{align*}
\begin{aligned} 
&\int_{\bbr^d}( \mathcal{F}(f^{\eps})(t)- \mathcal{F}(f^{\eps}_0))dx -\lambda\int_0^t \int_{\bbr^d} (D^2\mathcal{E}(U) F(U) (U^{\eps}-U) +  D\mathcal{E}(U) F(U^{\eps}) ) dx ds\\
&\qquad \le \int_{\bbr^d}( \mathcal{F}(f^{\eps})(t)- \mathcal{F}(f^{\eps}_0))dx +\lambda \int_0^t \int_{\bbr^d}\mathcal{F}(f^{\eps})(s) dxds.
\end{aligned}
\end{align*}
Since the kinetic entropy inequality \eqref{kinetic-ineq} yields
\[
\int_{\bbr^d} \mathcal{F}(f^{\eps})(t) dx  +\lambda\int_0^t \int_{\bbr^{2d}} |v|^2 f^{\eps} dxdv ds  \le \int_{\bbr^d} \mathcal{F}(f^{\eps}_0) dx,
\]
we have \eqref{last term}.
\end{proof}

By Lemma \ref{lem-ineq} and \ref{lem-remain}, we have the Gronwall type inequality
\[
\int_{\bbr^d}\mathcal{E}(U^{\eps}|U)(t) dx \le C\sqrt{\eps}+ C \int_0^t \int_{\bbr^d} \mathcal{E}(U^{\eps}|U) dxds.
\]
This implies
\[
\int_{\bbr^d}\mathcal{E}(U^{\eps}|U)(t) dx \le C\sqrt{\eps}.
\]
Since 
\beq
\mathcal{E}(U^{\eps}|U)(t) = \frac{1}{2}\rho^{\eps} (t) |(u^{\eps} - u)(t)|^2,
\eeq
we complete the relative entropy inequality \eqref{main-ineq}.

\subsection{Convergence} In this part, we use \eqref{main-ineq} to show the convergence \eqref{w-conv}, thus the convergence of the kinetic equation \eqref{vlasov} to the pressureless Euler system \eqref{system}.\\
\subsubsection{Convergence of $\rho^{\eps}$ and $\rho^{\eps}u^{\eps}$}
First of all, by Proposition \ref{prop-weak}, we have the conservation of mass 
\[
\|\rho^{\eps}(t)\|_{L^1(\bbr^d)} = \|\rho^{\eps}_0\|_{L^1(\bbr^d)},\quad t>0.
\]
Notice that by the assumption $(\mathcal{A}1)$, $\{\rho^{\eps}_0\}_{\eps>0}$ is uniformly bounded in $L^{\infty}(0,T_*;L^1(\bbr^d))$. \\
Thus  there exists $\tilde{\rho}$ such that
\beq\label{weak conv}
\rho^{\eps}\rightharpoonup  \tilde{\rho}\quad \mbox{weak-}* \quad \mbox{in} ~\mathcal{M}([0,T_*]\times\bbr^d),
\eeq
where $\mathcal{M}([0,T_*]\times\bbr^d)$ is the space of nonnegative Radon measures on $[0,T_*]\times\bbr^d$.\\
We now claim that 
\beq\label{ae}
\tilde{\rho}=\rho ~\mbox{on}~ [0,T_*]\times\bbr^d.
\eeq
We start with the fact that for any function $\phi\in C_c^{\infty}([0,T_*]\times\bbr^d)$, taking $\psi=\phi$ as a test function in \eqref{weak-form}, we find
\beq\label{eps-eq}
\int_0^t \int_{\bbr^d}(\rho^{\eps}\partial_t \phi + \rho^{\eps} u^{\eps} \cdot \nabla_x\phi)dxds + \int_{\bbr^d}\rho^{\eps}_0 \phi(0,x)dx=0.
\eeq
From \eqref{eps-eq}, we will derive the following equation of $\tilde{\rho}$:
\beq\label{claim-11}
\int_0^t \int_{\bbr^d}(\tilde{\rho}\partial_t \phi + \tilde{\rho} u \cdot \nabla_x\phi)dxds + \int_{\bbr^d}\rho_0 \phi(0,x)dx=0.
\eeq
The weak convergence \eqref{weak conv} yields
\beq\label{claim-1}
\int_0^t \int_{\bbr^d}\rho^{\eps}\partial_t \phi dxds \quad\rightarrow\quad \int_0^t \int_{\bbr^d}\tilde{\rho}\partial_t \phi dxds \quad\mbox{as}~ \eps\rightarrow 0.
\eeq
For the convergence of second term in \eqref{eps-eq}, we consider 
\begin{align}
\begin{aligned} \label{1flux}
&\int_0^t \int_{\bbr^d} (\rho^{\eps} u^{\eps} - \tilde{\rho} u)\cdot\nabla_x\phi dxds\\
&\qquad=\int_0^t \int_{\bbr^d} \rho^{\eps} (u^{\eps} -u) \cdot\nabla_x\phi  dxds + \int_0^t \int_{\bbr^d} (\rho^{\eps}- \tilde{\rho}) u\cdot\nabla_x\phi dxds\\
&\qquad=:\mathcal{I}_1 +\mathcal{I}_2.
\end{aligned}
\end{align}
We use the H$\ddot{\mbox{o}}$lder's inequality and the conservation of mass to estimate
\begin{align*}
\begin{aligned} 
\mathcal{I}_1&\le \|\nabla_x\phi\|_{\infty}\int_0^t \Big( \int_{\bbr^d} \rho^{\eps} dx\Big)^{1/2} \Big(\int_{\bbr^d}  \rho^{\eps} |u^{\eps} -u|^2 dx\Big)^{1/2}ds\\
&\le CT_* \|\rho^{\eps}_0\|_{L^1(\bbr^d)}^{1/2}\Big(\int_{\bbr^d} \rho^{\eps} |u^{\eps} -u|^2 dx\Big)^{1/2} .
\end{aligned}
\end{align*}
Then by the assumption $(\mathcal{A}2)$ and the relative entropy inequality \eqref{main-ineq}, we have
\[
\mathcal{I}_1\le C\eps^{1/4}\rightarrow 0 \quad\mbox{as}~ \eps\rightarrow 0.
\]
Since $u$ is smooth, \eqref{weak conv} yields 
\[
\mathcal{I}_2\rightarrow 0 \quad\mbox{as}~ \eps\rightarrow 0.
\]
Thus we have
\beq\label{claim-2}
\int_0^t \int_{\bbr^d} \rho^{\eps} u^{\eps} \cdot\nabla_x\phi dxds\rightarrow \int_0^t \int_{\bbr^d} \tilde{\rho} u\cdot\nabla_x\phi dxds \quad\mbox{as}~ \eps\rightarrow 0.
\eeq
It obviously follows from the assumption $(\mathcal{A}2)$ that
\beq\label{claim-3}
\int_{\bbr^d}\rho^{\eps}_0 \phi(0,x)dx \rightarrow \int_{\bbr^d}\rho_0 \phi(0,x)dx \quad\mbox{as}~ \eps\rightarrow 0.
\eeq
Therefore, by \eqref{eps-eq}, \eqref{claim-1}, \eqref{claim-2} and \eqref{claim-3}, we have shown the equation \eqref{claim-11}.\\
On the other hand, since $(\rho, u)$ is a classical solution to the continuity equation 
\[
\partial_t\rho + \nabla_x\cdot(\rho u)=0,
\] 
we can find
\beq\label{classic-eq}
\int_0^t \int_{\bbr^d}(\rho\partial_t \phi + \rho u \cdot \nabla_x\phi)dxds + \int_{\bbr^d}\rho_0 \phi(0,x)dx=0.
\eeq
We now use \eqref{claim-11} and \eqref{classic-eq} to complete the claim \eqref{ae}. \\
For any function $g\in C_c^{\infty}([0,T_*]\times\bbr^d)$, there is a smooth solution 
$\phi\in C_c^{\infty}([0,T_*]\times\bbr^d)$ of
\[
\partial_t\phi +u\cdot\nabla_x\phi = g,
\]
thanks to the smoothness of $u$.\\
Then, by \eqref{claim-11} and \eqref{classic-eq}, we have
\begin{align*}
\begin{aligned} 
\int_0^t \int_{\bbr^d} (\tilde{\rho} -\rho)g dxds &= \int_0^t \int_{\bbr^d} (\tilde{\rho} -\rho)(\partial_t\phi +u\cdot\nabla_x\phi) dxds \\
&=0,
\end{aligned}
\end{align*}
which implies the claim \eqref{ae}.\\
Therefore, by \eqref{weak conv}, \eqref{ae} and \eqref{1flux}, we have shown
\begin{align}
\begin{aligned}\label{mass-mom}
&\rho^{\eps}\rightharpoonup \rho\quad\mbox{weak-}* \quad \mbox{in} ~\mathcal{M}([0,T_*]\times\bbr^d),\\
&\rho^{\eps}u^{\eps} \rightharpoonup {\rho}u\quad\mbox{weak-}* \quad \mbox{in} ~\mathcal{M}([0,T_*]\times\bbr^d).
\end{aligned}
\end{align}

\subsubsection{Convergence of $\rho^{\eps} u^{\eps}\otimes u^{\eps}$}
For any vector-valued function $\Phi\in C_c^{\infty}([0,T_*]\times\bbr^d)$, we consider
\begin{align}
\begin{aligned} \label{2flux}
&\int_0^t \int_{\bbr^d} (\rho^{\eps} u^{\eps} \otimes u^{\eps} - \rho u\otimes u):\Phi dxds\\
&\qquad =\int_0^t \int_{\bbr^d} (\rho^{\eps} (u^{\eps}-u) \otimes u^{\eps} ) :\Phi dxds +\int_0^t \int_{\bbr^d} (\rho^{\eps} u\otimes (u^{\eps}-u) ):\Phi dxd\\
&\hspace{2cm} +\int_0^t \int_{\bbr^d} ((\rho^{\eps} - \rho ) u\otimes u):\Phi dxd\\
&\qquad := \mathcal{J}_1 +\mathcal{J}_2 +\mathcal{J}_3.
\end{aligned}
\end{align}
For $\mathcal{J}_1$, we use the H$\ddot{\mbox{o}}$lder's inequality and minimization property \eqref{mini} to get
\begin{align*}
\begin{aligned} 
\mathcal{J}_1 &\le \|\Phi\|_{L^{\infty}}\int_0^t \Big( \int_{\bbr^d} \rho^{\eps} |u^{\eps}-u|^2 dxds \Big)^{1/2} 
\Big( \int_{\bbr^d} \rho^{\eps} |u^{\eps}|^2 dx  \Big)^{1/2} ds\\
&\le C \int_0^t \Big(\int_{\bbr^d} \rho^{\eps} |u^{\eps}-u|^2 dxds \Big)^{1/2} \Big(\int_{\bbr^d} \mathcal{F} (f^{\eps})(t) dx \Big)^{1/2} ds.
\end{aligned}
\end{align*}
Then, by \eqref{kinetic-ineq}, \eqref{main-ineq} and the assumption $(\mathcal{A}1)$, we have
\begin{align*}
\begin{aligned} 
\mathcal{J}_1 
&\le C T_*  \Big(\int_{\bbr^d} \rho^{\eps} |u^{\eps}-u|^2 dx\Big)^{1/2}  \Big(\int_{\bbr^d} \mathcal{F} (f^{\eps}_0) dx \Big)^{1/2} \rightarrow 0 \quad\mbox{as}~ \eps\rightarrow 0.
\end{aligned}
\end{align*}
For $\mathcal{J}_2$, we use the H$\ddot{\mbox{o}}$lder's inequality and the conservation of mass to estimate
\begin{align*}
\begin{aligned} 
\mathcal{J}_2 &\le \|\Phi \|_{L^{\infty}}\|u\|_{L^{\infty}}\int_0^t \Big( \int_{\bbr^d} \rho^{\eps} dx\Big)^{1/2} \Big(\int_{\bbr^d}  \rho^{\eps} |u^{\eps} -u|^2 dx\Big)^{1/2}ds\\
&\le CT_* \|\rho^{\eps}_0\|_{L^1(\bbr^d)}^{1/2}\Big(\int_{\bbr^d} \rho^{\eps} |u^{\eps} -u|^2 dx\Big)^{1/2} \rightarrow 0 \quad\mbox{as}~ \eps\rightarrow 0.
\end{aligned}
\end{align*}
Taking $u\otimes u :\nabla_x\Phi$ as a test function due to regularity of $u$, it follows from the weak convergence of mass $\eqref{mass-mom}_1$ that 
\[
\mathcal{J}_3\rightarrow 0 \quad\mbox{as}~ \eps\rightarrow 0.
\]
Thus we have
\[
\int_0^t \int_{\bbr^d} (\rho^{\eps} u^{\eps} \otimes u^{\eps} - \rho u\otimes u):\Phi dxds\rightarrow 0 \quad\mbox{as}~ \eps\rightarrow 0.
\]
Therefore, we have shown 
\begin{align*}
\begin{aligned}
\rho^{\eps}u^{\eps}\otimes u^{\eps} \rightharpoonup {\rho}u\otimes u \quad\mbox{weak-}* \quad \mbox{in} ~\mathcal{M}([0,T_*]\times\bbr^d).
\end{aligned}
\end{align*}
\subsubsection{Convergence of $\int_{\bbr^d}f^{\eps} |v|^2 dv$}
For any $\phi\in C^{\infty}_c([0,T_*)\times\bbr^d)$, we have
\begin{align}
\begin{aligned}\label{e-conv} 
&\int_0^t \int_{\bbr^d} \Big(\int_{\bbr^d} f^{\eps}|v|^2 dv - \rho |u|^2 \Big) \phi dx ds\\
&\quad = \int_0^t \int_{\bbr^{2d}} f^{\eps}|v-u^{\eps}|^2 \phi dvdxds + \int_0^t \int_{\bbr^d} (\rho^{\eps} |u^{\eps}|^2 -\rho |u|^2) \phi dxdt\\
&\quad =I_1 +I_2. 
\end{aligned}
\end{align}
By \eqref{diss-zero}, we have 
\begin{align*}
\begin{aligned} 
{I}_1 \le \|\phi\|_{L^{\infty}} \int_0^{t} \mathcal{D}(f^{\eps}) ds  \rightarrow 0\quad\mbox{as}~ \eps\rightarrow 0.
\end{aligned}
\end{align*}
We use \eqref{main-ineq} and \eqref{mass-mom} to get
\begin{align*}
\begin{aligned} 
{I}_2 & = \int_0^t \int_{\bbr^d} \rho^{\eps} |u^{\eps} - u|^2 \phi dxdt + 2\int_0^t \int_{\bbr^d}(\rho^{\eps} u^{\eps} -\rho u)\cdot u\phi dxdt\\
&\quad -\int_0^t \int_{\bbr^d} (\rho^{\eps} - \rho) |u|^2 \phi dxdt\\
&\rightarrow 0.
\end{aligned}
\end{align*}
Therefore, we have 
\[
\int_{\bbr^d}f^{\eps} |v|^2 dv \rightharpoonup \rho |u|^2 \quad\mbox{weak-}* \quad \mbox{in} ~\mathcal{M}([0,T_*]\times\bbr^d).
\]
This complete the proof of theorem.\\
\subsubsection{Convergence of $\int_{\bbr^d}f^{\eps} \psi(v) dv$ in \eqref{dirac-con}}
We first use \eqref{main-ineq} and \eqref{diss-zero} to estimate
\begin{align}
\begin{aligned}\label{new-1} 
\int_0^t \int_{\bbr^{2d}} f^{\eps}|v-u|^2 dvdxds &\le 2\int_0^t \int_{\bbr^{2d}} f^{\eps}(|v-u^{\eps}|^2 + |u^{\eps}-u|^2) dvdxds\\
& \le C(\eps + T^*\sqrt{\eps}) \le C\sqrt{\eps}.
\end{aligned}
\end{align}
For any $\phi\in C^{\infty}_c([0,T_*)\times\bbr^d)$ and $\psi\in C^1(\bbr^d)$ with $\nabla\psi\in L^{\infty}(\bbr^d)$, by using \eqref{new-1} and mean-value theorem, we have
\begin{align*}
\begin{aligned} 
&\int_0^t\int_{\bbr^{2d}} \phi(t,x) f^{\eps} (\psi(v)-\psi(u)) dvdxds \\
&\quad = \int_{|v-u|\le \eps^{1/4}} \phi(t,x) f^{\eps} (\psi(v)-\psi(u)) dvdxds+ \int_{|v-u|>\eps^{1/4}} \phi(t,x) f^{\eps} (\psi(v)-\psi(u)) dvdxds\\
&\quad \le \|\phi\|_{\infty} \|\nabla\psi\|_{\infty} \Big(\int_{|v-u|\le \eps^{1/4}} f^{\eps} |v-u| dvdxds
+ \int_{|v-u|>\eps^{1/4}}f^{\eps} |v-u| dvdxds \Big)\\
&\quad \le \|\phi\|_{\infty} \|\nabla\psi\|_{\infty} \Big(\eps^{1/4} T^*\|f^{\eps}\|_{L^1(\bbr^{2d})}
+ \eps^{-1/4}\int_0^t \int_{\bbr^{2d}} f^{\eps} |v-u|^2 dvdxds \Big)\\
&\quad \le C \eps^{1/4},
\end{aligned}
\end{align*} 
which yields
\[
\int_{\bbr^d}f^{\eps} \psi(v) dv \rightharpoonup \rho^{\eps} \psi(u) \quad\mbox{weakly} \quad \mbox{in} ~\mathcal{M}([0,T_*]\times\bbr^d).
\]
Therefore, by the convergence of mass $\eqref{mass-mom}_1$, we complete
\[
\int_{\bbr^d}f^{\eps} \psi(v) dv \rightharpoonup \rho \psi(u) \quad\mbox{weakly} \quad \mbox{in} ~\mathcal{M}([0,T_*]\times\bbr^d).
\]

\begin{appendix}
\section{Proof of Proposition \ref{prop-key}}
First of all, by the definition of the relative entropy $\eqref{relative}_1$, we have 
\begin{align*}
\begin{aligned}
\frac{d}{dt}\int_{\bbr^d}\mathcal{E}(V|U) dx & = \int_{\bbr^d}\partial_t\mathcal{E}(V) dx-\int_{\bbr^d} D\mathcal{E}(U)\cdot\partial_t U dx\\
&\quad -\int_{\bbr^d} D^2\mathcal{E}(U) \partial_t U \cdot (V-U) dx - \int_{\bbr^d} D\mathcal{E}(U) \partial_t (V-U) dx \\
&=\int_{\bbr^d}\partial_t\mathcal{E}(V) dx-\int_{\bbr^d} D^2\mathcal{E}(U) \partial_t U \cdot (V-U) dx - \int_{\bbr^d} D\mathcal{E}(U)\cdot \partial_t V dx \\
&=:\sum_{l=1}^3\mathcal{I}_K.
\end{aligned}
\end{align*}
By \eqref{system}, we rewrite $\mathcal{I}_2$ as
\begin{align*}
\begin{aligned}
\mathcal{I}_2 &= \int_{\bbr^d} D^2\mathcal{E}(U) (\mbox{div}_x A(U) -\lambda F(U)) \cdot (V-U) dx\\
&=  \int_{\bbr^d} \nabla_xD\mathcal{E}(U): DA(U) \cdot (V-U)dx - \lambda\int_{\bbr^d} D^2\mathcal{E}(U) F(U) \cdot (V-U) dx,
\end{aligned}
\end{align*}
where we have used the following formula on integration by parts: (See \cite{B-V,K-M-T-2} for its derivation.)
\[
 \int_{\bbr^d} D^2\mathcal{E}(U) \mbox{div}_x A(U) \cdot (V-U)dx =  \int_{\bbr^d} \nabla_xD\mathcal{E}(U): DA(U) \cdot (V-U)dx .
\]
By adding and subtracting, $\mathcal{I}_3$ can be written as
\begin{align*}
\begin{aligned}
\mathcal{I}_3 &= - \int_{\bbr^d} D\mathcal{E}(U) \cdot(\partial_t V + \mbox{div}_x A(V)-\lambda F(V))dx\\
&\quad +\int_{\bbr^d} D\mathcal{E}(U) \cdot\mbox{div}_x A(V)dx -\lambda \int_{\bbr^d} D\mathcal{E}(U)\cdot F(V) dx\\
&= - \int_{\bbr^d} D\mathcal{E}(U)\cdot (\partial_t V + \mbox{div}_x A(V)-\lambda F(V))dx\\
&\quad -\int_{\bbr^d} \nabla_x D\mathcal{E}(U) : A(V)dx -\lambda \int_{\bbr^d} D\mathcal{E}(U)\cdot F(V) dx
\end{aligned}
\end{align*}
We use the relative flux $\eqref{relative}_2$ to get
\begin{align*}
\begin{aligned}
\mathcal{I}_2+\mathcal{I}_3 &=  -\int_{\bbr^d} \nabla_x D\mathcal{E}(U) : A(V|U)dx-\int_{\bbr^d} D\mathcal{E}(U) \cdot[\partial_tV + \mbox{div}_x A(V) -\lambda F(V)] dx\\
&\quad -\lambda \int_{\bbr^d} (D^2\mathcal{E}(U) F(U)\cdot (V-U) +  D\mathcal{E}(U) \cdot F(V) ) dx
 -\int_{\bbr^d} \nabla_x D\mathcal{E}(U) : A(U).
\end{aligned}
\end{align*}
Thanks to \eqref{entropy-flux}, the last term vanish as follows.
\begin{align*}
\begin{aligned}
 -\int_{\bbr^d} \nabla_x D\mathcal{E}(U) : A(U) = \int_{\bbr^d} D\mathcal{E}(U) \cdot\mbox{div}_x A(U)dx=\int_{\bbr^d} \mbox{div}_x G(U)dx =0,
\end{aligned}
\end{align*}
which complete the proof.
\end{appendix}

\end{document}